%-----------------------------------------------------------------------
%    Beginning of article.tex
%-----------------------------------------------------------------------
%
%    This is an AMS-LaTeX sample proceedings article file for use with
%    the amsproc document class and author packages based on amsproc.
%
%    Replace amsproc by the document class name for the target series,
%    e.g. pspum-l.
%
\documentclass{conm-p-l}

\usepackage{amsmath}
\usepackage{amssymb}
\usepackage{yfonts}
\usepackage{hyperref}
\usepackage{mathrsfs}
\usepackage {latexsym}
\usepackage{graphics}
\usepackage{txfonts}
\usepackage{breqn}
\usepackage{cite}
\usepackage{color}
\usepackage[shortlabels]{enumitem}
\usepackage{wasysym}
\usepackage{tikz}
\usetikzlibrary{snakes}
\usetikzlibrary{arrows.meta}
\usetikzlibrary{decorations.pathmorphing}
\usetikzlibrary{er, positioning} 
\usepackage{mathtools}
\usepackage{comment}
\usepackage{enumitem}
\usepackage{float}

\newif\ifshow
%\showtrue

\ifshow

\else
\excludecomment{details}
\fi

\newtheorem{theorem}{Theorem}[section]
\newtheorem{remark}{Remark}[section]
\newtheorem{lemma}[theorem]{Lemma}

\newtheorem{define}{Definition}[section]  
\newtheorem{example}{Example}[section] 

\newtheorem*{proposition*}{Proposition}

\newcommand{\joinR}{\hspace{-.1em}}
\newcommand{\RomanI}{I}
\newcommand{\RomanII}{\mbox{\RomanI\joinR\RomanI}}

\DeclareMathOperator*{\supp}{supp}
\DeclareMathOperator*{\divergence}{div}

\allowdisplaybreaks

\numberwithin{equation}{section}

%    Absolute value notation

%    Blank box placeholder for figures (to avoid requiring any
%    particular graphics capabilities for printing this document).

\begin{document}

\title[Convex integration on singular SPDEs]{Remarks on the convex integration technique applied to singular stochastic partial differential equations}
\subjclass[2010]{35A02; 35R60; 76F30}

%    Information for first author
\author[Hongjie Dong]{Hongjie Dong}
%    Address of record for the research reported here
\address{Division of Applied Mathematics, Brown University, 182 George Street, Providence, RI 02912, U.S.A.}

\email{Hongjie_Dong@brown.edu}
%    \thanks will become a 1st page footnote.
\thanks{The first author was partially supported by the NSF under agreement DMS-2055244.}

%    Information for second author
\author[Kazuo Yamazaki]{Kazuo Yamazaki}  
\address{Department of Mathematics, University of Nebraska, Lincoln, 243 Avery Hall, PO Box, 880130, Lincoln, NE 68588-0130, U.S.A.}
\email{kyamazaki2@nebraska.edu}
\thanks{The second author was partially supported by the NSF Award No. 2531744 and Simons Foundation MPS-TSM-00962572.}

%    General info
\subjclass[2020]{Primary 35A02 35R60; Secondary 76F30, 35K05}
\date{July 1, 2025 and, in revised form, 12/31/2025.}

\keywords{Convex integration; heat equation; non-uniqueness; singular stochastic partial differential equations; space-time white noise.}

\begin{abstract}
Singular stochastic partial differential equations informally refer to the partial differential equations with rough random force that leads to the products in the nonlinear terms becoming ill-defined. Besides the theories of regularity structures and paracontrolled distributions, the technique of convex integration has emerged as a possible approach to construct a solution to such singular stochastic partial differential equations. We review recent developments in this area, and also demonstrate that an application of the convex integration technique to prove non-uniqueness seems unlikely for a particular singular stochastic partial differential equation, specifically the $\Phi^{4}$ model from quantum field theory. 
\end{abstract}

\maketitle

\section{Introduction: motivation from physics and real-world applications}\label{Section 1}  
Introduced by Landau and Lifshitz \cite{LL57}, the study of hydrodynamic fluctuations via stochastic partial differential equations - hereafter SPDEs - has since evolved into an important research area. Many physicists followed suit to investigate further: ferromagnet \cite{MM75}; Kardar-Parisi-Zhang (KPZ) equation \cite{KPZ86}; magnetohydrodynamics (MHD) system \cite{CT92}; Navier-Stokes equations \cite{FNS77, YO86}; $\Phi^{4}$ model \cite{PW81, GJ87}; Rayleigh-B$\acute{\mathrm{e}}$nard equation \cite{ACHS81, GP75, HS92, SH77, ZS71}. The common choice of random force among all these aforementioned works and many others that we choose to not mention for brevity is the space-time white noise (STWN) (see \eqref{STWN}). As we will describe in detail, the roughness of the force transmits to the solution and the products within the nonlinear terms become ill-defined, disallowing any way to define a solution in a classical manner; we refer to such SPDEs as singular SPDEs. The celebrated theories of regularity structures \cite{H14} by Hairer and paracontrolled distributions \cite{GIP15} by Gubinelli, Imkeller, and Perkowski now allow such singular SPDEs to admit some solution theory and have led this research area to flourish in the past two decades. Nevertheless, informally stated, even these powerful new methods, without further ideas, seem restricted to solution theory that is only local-in-time, and inapplicable in high dimensions. The technique of convex integration gives a completely new perspective; solutions constructed via this technique are usually always global-in-time, generally considered easier in higher dimensions, and non-unique. In addition to reviewing these developments, we prove in Theorem \ref{Theorem 4.1} that any weak solution to heat equation with damping of odd power with minimum regularity is unique. Its special case is the heat equation with cubic damping, which corresponds to the deterministic $\Phi^{4}$ model (see \eqref{Define Phi4}). This implies that, unless the STWN can stir up non-uniqueness, a successful application of the convex integration technique on the $\Phi^{4}$ model to prove non-uniqueness, which was suggested as an open problem in \cite[Section 4]{Y25e}, is highly unlikely. 

\section{Review on singular SPDEs and convex integration technique} 
We denote $\mathbb{N} \triangleq \{1,2, \hdots, \}$, and $\mathbb{N}_{0} \triangleq \mathbb{N} \cup \{0\}$, and we will mostly work on $\mathbb{T}^{d}$ for $d \in \mathbb{N}$ as the spatial domain of $x$; we write ``$n$D'' for ``$n$-dimensional.'' For brevity we denote $\partial_{t} \triangleq \frac{\partial}{\partial t}$ and $\partial_{tt} \triangleq \frac{\partial^{2}}{(\partial t)^{2}}$. We define $\Lambda^{\gamma} \triangleq (-\Delta)^{\frac{\gamma}{2}}$ as a fractional derivative of order $\gamma \in \mathbb{R}$, specifically a Fourier operator with a symbol $\lvert m \rvert^{\gamma}$ so that $\widehat{ \Lambda^{\gamma} f} (m) = \lvert m \rvert^{\gamma} \hat{f} (m)$.  We recall the H$\ddot{\mathrm{o}}$lder-Besov spaces $\mathscr{C}^{\gamma} \triangleq B_{\infty,\infty}^{\gamma}$ for $\gamma \in \mathbb{R}$ which is equivalent to the classical H$\ddot{\mathrm{o}}$lder spaces $C^{\alpha}$ whenever $\alpha \in (0,\infty) \setminus \mathbb{N}$ although $C^{k} \subsetneq \mathscr{C}^{k}$ for all $k \in \mathbb{N}$ (see \hspace{1sp}\cite[p. 99]{BCD11}). We also recall the Littlewood-Paley decomposition that allows us to write any distribution $f$ as $f = \sum_{j\geq -1} \Delta_{j}f$. Informally, $\Delta_{-1}$ and $\Delta_{j}$ for $j \geq 0$ restrict the frequency support of $f$ near origin and the annulus around $2^{j}$, respectively; we refer to \cite[Section 2.2]{BCD11} for details. Furthermore, Bony's paraproducts allow us to consider a product $fg$ of $f \in \mathscr{C}^{\alpha}$ and $g \in \mathscr{C}^{\beta}$ as 
\begin{equation}\label{Bony}
fg = f \prec g + g \prec f + f \circ g 
\end{equation} 
where
\begin{equation*}
f \prec g  \triangleq \sum_{j\geq -1} \sum_{i=-1}^{j-2} \Delta_{i} f \Delta_{j} g  \hspace{2mm} \text{ and } \hspace{2mm} f \circ g \triangleq \sum_{i, j \geq -1: \lvert i-j \rvert \leq 1 } \Delta_{i} \Delta_{j} g; 
\end{equation*} 
we refer to \cite[Section 2.6]{BCD11} for details. 
\begin{remark}\label{Remark 2.1} 
The product $fg$ in \eqref{Bony}, where $f \in \mathscr{C}^{\alpha}$ and $g \in \mathscr{C}^{\beta}$,  is well-defined if and only if $\alpha + \beta > 0$. This restriction of $\alpha + \beta > 0$ comes only from the resonant term $f \circ g$ (see \cite[p. 12]{GIP15}). 
\end{remark} 

\subsection{Review on singular SPDEs}\label{Section 2.1} 
The purpose of this section is to review recent developments on singular SPDEs. We denote by $\xi$ the STWN, which is a distribution-valued Gaussian field with a covariance of 
\begin{equation}\label{STWN} 
\mathbb{E} [ \xi(t,x) \xi(y,s) ] =  \delta(t-s) \delta(x-y), 
\end{equation} 
where we denoted the Dirac delta at the origin by $\delta$; i.e., assuming for generality that $\xi$ is a vector,  \eqref{STWN} implies 
\begin{align*}
\mathbb{E} [ \xi_{i} (\phi) \xi_{j} (\psi)] = 1_{\{ i = j \}} \int_{\mathbb{R} \times \mathbb{T}^{d}} \phi(t,x) \psi(t,x) dt dx \hspace{3mm} \forall \hspace{1mm} \phi, \psi \in \mathcal{S} ( \mathbb{R} \times \mathbb{T}^{d}),
\end{align*}
where $\mathcal{S}$ denotes the Schwartz space. Informally, white-in-space implies that the covariance consists only of the Dirac delta in space while white-in-time refers to the case of only the Dirac delta in time. 

When a parabolic PDE with heat operator $\partial_{t} - \Delta$ in $\mathbb{T}^{d}$ is forced by STWN $\xi$, we fix a scaling 
\begin{align*}
\mathfrak{s} \triangleq (2, \underbrace{1, \hdots, 1}_{d\text{-many}}).
\end{align*}
Then we associate a corresponding metric and scaling dimension by $d_{\mathfrak{s}}((t,x), (s,y)) \triangleq \lvert t-s \rvert^{\frac{1}{\mathfrak{s}_{0}}} +  \sum_{i=1}^{d} \lvert x_{i} - y_{i} \rvert^{\frac{1}{\mathfrak{s}_{i}}}$ and $\lvert \mathfrak{s} \rvert \triangleq 2 + d$, respectively (see \cite[p. 298]{H14}). We define $\lVert (t,x)- (s,y) \rVert_{\mathfrak{s}} \triangleq d_{\mathfrak{s}}((t,x),(s,y))$. Additionally, we define scaling maps (see \cite[p. 300]{H14}) $\mathscr{S}_{\mathfrak{s}}^{\delta}: \mathbb{R}^{d+1} \mapsto \mathbb{R}^{d+1}$ by 
\begin{equation}
\mathscr{S}_{\mathfrak{s}}^{\delta} ((t,x)) = (\delta^{-\mathfrak{s}_{0}} t, \delta^{-\mathfrak{s}_{1}} x_{1} \hdots, \delta^{-\mathfrak{s}_{d}} x_{d}),  \hspace{4mm} ( \mathscr{S}_{\mathfrak{s}, \bar{z}}^{\delta} \varphi) (z) \triangleq \delta^{-\lvert \mathfrak{s} \rvert} \varphi( \mathscr{S}_{\mathfrak{s}}^{\delta} (z-\bar{z})). 
\end{equation}  
We denote $B_{\mathfrak{s}}(0,1)$ the ball of radius 1 with respect to the distance $d_{\mathfrak{s}}$, centered at the origin. 
\begin{define}
\rm{(\hspace{1sp}\cite[Definition 3.7]{H14})} Let $\alpha < 0$ and $r = - \lceil \alpha \rceil > 0$. We say $\xi \in \mathcal{S}'$ belongs to $\mathscr{C}_{\mathfrak{s}}^{\alpha}$ if it belongs to the dual of $C_{c}^{r}(\mathbb{R} \times \mathbb{T}^{d})$ and for every compact set $K \subset \mathbb{R} \times \mathbb{T}^{d}$ there exists a constant $C$ such that 
\begin{align*}
\langle \xi, \mathscr{S}_{\mathfrak{s},\bar{z}}^{\delta} \eta \rangle  \leq C \delta^{\alpha} \hspace{3mm} \forall \, \eta \in \{ \psi \in C^{r}: \lVert \psi \rVert_{C^{r}} \leq 1, \supp \psi \subset B_{\mathfrak{s}}(0,1)\},  
\end{align*}
for all $\delta \in (0, 1]$ and all $\bar{z} \in K$. 
\end{define}
Under these settings, STWN $\xi$ is known to possess the regularity of 
\begin{equation}\label{Regularity of STWN}
\xi \in \mathscr{C}^{\beta}_{\mathfrak{s}} \hspace{1mm}  \text{ for } \beta < - \frac{d+2}{2} \hspace{3mm} \mathbb{P}\text{-a.s.}
\end{equation} 
according to \cite[Lemma 10.2]{H14} (also \cite[Proposition 3.20]{H14}); we observe that the regularity of $\xi$ deteriorates in higher dimensions.  

To discuss a concrete example, let us consider the Navier-Stokes equations forced by STWN $\xi$: given $u^{\text{in}}$ as the initial data, 
\begin{equation}\label{Navier-Stokes}
\partial_{t} u +  \divergence (u\otimes u) + \nabla \pi = \Delta u + \xi, \hspace{3mm} \nabla\cdot u = 0, 
\end{equation} 
solved by the velocity field $u: \mathbb{R}_{+} \times \mathbb{T}^{d} \mapsto \mathbb{R}^{d}$ and the pressure field $\pi: \mathbb{R}_{+} \times \mathbb{T}^{d} \mapsto \mathbb{R}$ for $d \geq 2$. For simplicity, we assume that $\xi$ is divergence-free. A traditional definition of a solution to SPDE is of mild form (e.g. \cite[Theorem 5.4]{DZ14}) and thus this suggests 
\begin{align}\label{mild Navier-Stokes}
u(t) = e^{t \Delta} u^{\text{in}} - \int_{0}^{t} e^{(t-s) \Delta} \mathbb{P}_{L} \divergence (u\otimes u) (s) ds + \int_{0}^{t} e^{(t-s) \Delta} \xi(s) ds, 
\end{align} 
where $\mathbb{P}_{L}$ denotes the Leray projection onto the space of divergence-free vector fields. Upon taking any norm $\lVert \cdot \rVert$ on both sides so that
\begin{align*}
\lVert u(t) \rVert \leq \lVert e^{t \Delta} u^{\text{in}} \rVert + \left\lVert  \int_{0}^{t} e^{(t-s) \Delta} \mathbb{P}_{L} \divergence (u\otimes u) (s) ds \right\rVert + \left\lVert \int_{0}^{t} e^{(t-s) \Delta} \xi(s) ds \right\rVert, 
\end{align*} 
and considering the well-known property of the heat kernel $e^{(t-s) \Delta}$ (e.g. \cite[Proposition 6.16]{H14}), one deduces that at best, $ \int_{0}^{t} e^{(t-s) \Delta} \xi(s) ds \in \mathscr{C}^{\beta}_{\mathfrak{s}}$ for $\beta < 1- \frac{d}{2}$ $\mathbb{P}$-a.s. and consequently, 
\begin{equation}\label{deduction 1} 
u \in \mathscr{C}^{\beta}_{\mathfrak{s}} \hspace{1mm} \text{ for } \hspace{1mm} \beta< 1 - \frac{d}{2}  \, \mathbb{P}\text{-a.s.} 
\end{equation} 
Then, according to Remark \ref{Remark 2.1}, the product $u \otimes u$ in \eqref{mild Navier-Stokes} is not well-defined for any $d \geq 2$ regardless of the norm $\lVert \cdot \rVert$. 

\begin{remark}\label{Remark 2.2}
The deduction of the regularity of $u$ that we just described is based on the fact that it is a mild solution that satisfies \eqref{mild Navier-Stokes} and it is standard in various classical approaches. For example, Galerkin approximation \cite[Section 8]{CF88} (also mollifier \cite{MB02}, or Fourier truncation \cite{FMRR14}) applied on the deterministic Navier-Stokes equations would require the external force $f$ to be in $L^{2}(0,T; H^{-1}(\mathbb{T}^{d}))$ because the approximate solution satisfies the energy inequality and thus so does the limit. We will see in the Example \ref{Example on singular SQG} that the solution constructed via convex integration can defy this seemingly correct deduction by constructing a solution even when the force is much rougher than expected.  
\end{remark} 

The ill-defined products indicate the necessity to add/subtract renormalization constants upon mollifying the noise in space and taking the limit on the maximal solution. Hairer was the first to realize the potential of the rough path theory due to Lyons \cite{L98} and prove the global solution theory \cite[Theorem 3.6]{H11} for the 1D generalized Burgers' equation forced by STWN, the case in which the solution exists in $\mathscr{C}^{\beta}_{\mathfrak{s}}$ for any $\beta < \frac{1}{2}$ and consequently Young's integral of the nonlinear term becomes ill-defined. In \cite{H13}, Hairer went further and solved the 1D KPZ equation forced by STWN: 
\begin{equation}\label{Define KPZ}
\partial_{t} h = \Delta h + \lvert \nabla h \rvert^{2} + \xi, 
\end{equation} 
solved by $h: \mathbb{R}_{+} \times \mathbb{T} \mapsto \mathbb{R}$ that represents interface height. This singular SPDE can be studied using Cole-Hopf transform and attracted much attention (e.g. \cite{BCJ94, BG97}). Here, $\xi \in \mathscr{C}^{\beta}_{\mathfrak{s}}$ for $\beta < - \frac{3}{2}$ $\mathbb{P}$-a.s. according to \eqref{Regularity of STWN} and hence the analogous deduction to \eqref{deduction 1} leads us to expect $\nabla h \in \mathscr{C}^{\beta}_{\mathfrak{s}}$ for $\beta < -\frac{1}{2}$  $\mathbb{P}$-a.s. making the ``$\lvert \nabla h \rvert^{2}$'' in \eqref{Define KPZ} far from well-defined according to Remark \ref{Remark 2.1}. The rough path theory has proven to be effective, and now there was a need for a systematic approach to construct solution theory for other singular SPDEs. Precisely for this reason, independently, Hairer \cite{H14} and Gubinelli, Imkeller, and Perkowski \cite{GIP15} developed the theories of regularity structures and paracontrolled distributions, respectively. Hairer \cite{GIP15} already treated the local-in-time solution theory for 2D parabolic Anderson model with white-in-space noise and 3D $\Phi^{4}$ model with STWN (also \cite{CC18} by Catellier and Chouk); for a subsequent purpose, we introduce the $\Phi^{4}$ model here:
\begin{equation}\label{Define Phi4}
\partial_{t}u + u^{3} = \Delta u + \xi. 
\end{equation} 
Moreover, Gubinelli, Imkeller, and Perkowski in \cite{GIP15} studied the 1D Burgers' equation forced by STWN $\xi$ and diffused by $-(-\Delta)^{\sigma}$ for $\sigma \in (\frac{5}{6}, 1]$ and 2D generalized parabolic Anderson model. Further applications of these theories to prove local solution theory include the 3D Navier-Stokes equations \cite{ZZ15} and the 3D MHD system \cite{Y23a}. 

There are two caveats worth mentioning concerning these developments. First, all of the solution theories discussed in \cite{H14, GIP15, CC18, ZZ15, Y23a} are limited to local-in-time. There are several special cases in which global solution theory for singular SPDEs have been proven with extra effort, taking advantage of the special structures of the equations. 

\begin{enumerate}[label=(\roman*)]
\item The KPZ equation has an advantage of the Cole-Hopf transform, and Gubinelli and Perkowski observed that the solution to the 1D KPZ equation constructed by Hairer \cite{H13} is global-in-time (see \cite[p. 170 and Corollary 7.5]{GP17}).   
\item The 2D Navier-Stokes equations \eqref{Navier-Stokes} admit an explicit knowledge of its invariant measure $\mu$ due to the divergence-free property that leads to $\int_{\mathbb{T}^{2}} (u\cdot\nabla)u\cdot \Delta u dx = 0$. Exploiting this fact, Da Prato and Debussche \cite{DD02} constructed a global solution theory for the 2D Navier-Stokes equations forced by STWN starting from $\mu$-almost every initial data. More recently, Hairer and Rosati \cite{HR24} provided a new approach to prove such a global solution theory without relying on explicit knowledge of invariant measure; this new approach is based on harmonic analysis estimates and facts from the 2D Anderson Hamiltonian, specifically \cite{AC15} by Allez and Chouk. The approach of \cite{HR24} has some flexibility and was extended to the 2D MHD system forced by STWN which does not admit an explicit knowledge of its invariant measure (\hspace{1sp}\cite{Y23c}), and the 1D Burgers' equation forced by $(-\Delta)^{\frac{1}{4}} \xi$ with $\xi$ being the STWN (\hspace{1sp}\cite{Y25f}). In order for the approach of \cite{HR24} to work, it seems necessary that the renormalization constant grows at most logarithmically and the diffusion $-(-\Delta)^{m}$ must be at least as strong as the energy-critical level which informally requires that $m \geq \frac{2+d}{4}$; we refer to \cite[Section 1.3]{Y25f} on this discussion. More recently, the approach of \cite{HR24} was applied on the 3D Navier-Stokes equations forced by STWN and diffused by $-(-\Delta)^{\frac{5}{4}}$. 
\item The $\Phi^{4}$ model \eqref{Define Phi4} has a nonlinear term that has damping effect, and for this reason, Da Prato and Debussche in \cite[Theorem 4.2]{DD03b} were able to extend their strategy in \cite{DD02} to prove its global solution theory (also \cite{MR99}). Subsequently, Mourrat and Weber \cite{MW17} extended such a result to the whole plane. 
\end{enumerate}  

The second caveat is that broadly stated, both theories of regularity structures and paracontrolled distributions seem limited to the case of locally subcritical SPDEs, the case in which, informally stated, the homogeneity of the nonlinear term exceeds that of the force; the precise statement can be found in \cite[Assumption 8.3]{H14}. Let us describe this criterion in detail through the following example of the Navier-Stokes equations \eqref{Navier-Stokes}. 

\begin{example}\label{Example 2.1}
Recalling that the homogeneity of a product is sum of homogeneities, it follows from \eqref{deduction 1} that $\divergence (u\otimes u)$ has the homogeneity akin to $\mathscr{C}^{\beta}_{\mathfrak{s}}$ for $\beta < 1 - d$. The locally subcritical case requires then that $1 - d > - \frac{d+2}{2}$ according to \eqref{Regularity of STWN}, which is precisely when $4 > d$, establishing that the 4D case is locally critical and any dimension above is locally supercritical, to which even the theories of regularity structures or paracontrolled distributions alone are not expected to attain even a local-in-time solution theory. 

Let us examine the differences among the locally subcritical, critical, and supercritical cases in more detail. As we now know that the locally critical case is $d=4$, we consider $d \in \{3,4\}$. The computations here basically follow those of \cite{ZZ15} (see also \cite{Y23a}). First, we denote the STWN $\xi$ by a black circle $\scalebox{0.18}{\begin{tikzpicture}
\filldraw[black] (0,0) circle (6pt); 
\end{tikzpicture}
}$, convolution against a heat kernel $e^{t \Delta}$ by a straight black line $\scalebox{0.18}{\begin{tikzpicture}
\draw[black, thick] (0,1.5) -- (0,0);
\end{tikzpicture}
}$ , and convolution against a heat kernel $e^{t \Delta}$ after $\mathbb{P}_{L}\divergence$ by a zigzag line 
$\scalebox{0.16}{\begin{tikzpicture}
\draw[snake=zigzag](0,1.5) -- (0,0);
\end{tikzpicture}
}$. 
\begin{enumerate}[label=(\roman*)]
\item We apply $\mathbb{P}_{L}$ to \eqref{Navier-Stokes} and consider the following stochastic Stokes equation:
\begin{equation}\label{Define z1}
\partial_{t}  z^{ 
\scalebox{0.18}{\begin{tikzpicture}
\draw[black, thick] (0.5,1) -- (0.5,0);
\filldraw[black] (0.5,1) circle (6pt); 
\end{tikzpicture}
}} - \Delta z^{ 
\scalebox{0.18}{\begin{tikzpicture}
\draw[black, thick] (0.5,1) -- (0.5,0);
\filldraw[black] (0.5,1) circle (6pt); 
\end{tikzpicture}
}} = \xi, \hspace{3mm}  z^{ 
\scalebox{0.18}{\begin{tikzpicture}
\draw[black, thick] (0.5,1) -- (0.5,0);
\filldraw[black] (0.5,1) circle (6pt); 
\end{tikzpicture}
}} \rvert_{t=0} = 0. 
\end{equation} 
We define $u_{1} \triangleq u - z^{ 
\scalebox{0.18}{\begin{tikzpicture}
\draw[black, thick] (0.5,1) -- (0.5,0);
\filldraw[black] (0.5,1) circle (6pt); 
\end{tikzpicture}
}}$ which then solves 
\begin{equation}\label{Define u1}
\partial_{t} u_{1} - \Delta u_{1} = - \mathbb{P}_{L} \divergence (u_{1} + z^{ 
\scalebox{0.18}{\begin{tikzpicture}
\draw[black, thick] (0.5,1) -- (0.5,0);
\filldraw[black] (0.5,1) circle (6pt); 
\end{tikzpicture}
}} )^{\otimes 2},
\end{equation} 
where we defined $X^{\otimes 2} \triangleq X \otimes X$. An analogous reasoning to \eqref{deduction 1} shows that 
\begin{equation}\label{Regularity of z1}
z^{ 
\scalebox{0.18}{\begin{tikzpicture}
\draw[black, thick] (0.5,1) -- (0.5,0);
\filldraw[black] (0.5,1) circle (6pt); 
\end{tikzpicture}
}} \in \mathscr{C}^{\beta}_{\mathfrak{s}} \text{ for } \beta < 1 - \frac{d}{2}, 
\end{equation} 
yielding $u_{1} \in \mathscr{C}^{\beta}_{\mathfrak{s}}$ for $\beta < 3-d$ $\mathbb{P}$-a.s.
\item As $u_{1}$ is still a distribution, we now consider 
\begin{equation}\label{Define z2}
\partial_{t} z^{ 
\scalebox{0.18}{\begin{tikzpicture}
\draw[black, thick] (-0.7,0.9) -- (0,0);
\draw[black, thick] (0.7,0.9) -- (0,0);
\draw[snake=zigzag] (0,0) -- (0,-1);
\filldraw[black] (-0.7,0.9) circle (6pt); 
\filldraw[black] (0.7,0.9) circle (6pt); 
\end{tikzpicture}
}} - \Delta z^{ 
\scalebox{0.18}{\begin{tikzpicture}
\draw[black, thick] (-0.7,0.9) -- (0,0);
\draw[black, thick] (0.7,0.9) -- (0,0);
\draw[snake=zigzag] (0,0) -- (0,-1);
\filldraw[black] (-0.7,0.9) circle (6pt); 
\filldraw[black] (0.7,0.9) circle (6pt); 
\end{tikzpicture}
}} = - \mathbb{P}_{L} \divergence ( z^{ 
\scalebox{0.18}{\begin{tikzpicture}
\draw[black, thick] (0.5,1) -- (0.5,0);
\filldraw[black] (0.5,1) circle (6pt); 
\end{tikzpicture}
}})^{\otimes 2}, \hspace{3mm} z^{ 
\scalebox{0.18}{\begin{tikzpicture}
\draw[black, thick] (-0.7,0.9) -- (0,0);
\draw[black, thick] (0.7,0.9) -- (0,0);
\draw[snake=zigzag] (0,0) -- (0,-1);
\filldraw[black] (-0.7,0.9) circle (6pt); 
\filldraw[black] (0.7,0.9) circle (6pt); 
\end{tikzpicture}
}} \rvert_{t=0} = 0. 
\end{equation} 
We then define $u_{2} \triangleq u_{1} - z^{ 
\scalebox{0.18}{\begin{tikzpicture}
\draw[black, thick] (-0.7,0.9) -- (0,0);
\draw[black, thick] (0.7,0.9) -- (0,0);
\draw[snake=zigzag] (0,0) -- (0,-1);
\filldraw[black] (-0.7,0.9) circle (6pt); 
\filldraw[black] (0.7,0.9) circle (6pt); 
\end{tikzpicture}
}}$ so that $u_{2}$ satisfies 
\begin{equation}\label{Define u2}
\partial_{t} u_{2} - \Delta u_{2} = - \mathbb{P}_{L} \divergence  \left( ( u_{2} + z^{ 
\scalebox{0.18}{\begin{tikzpicture}
\draw[black, thick] (-0.7,0.9) -- (0,0);
\draw[black, thick] (0.7,0.9) -- (0,0);
\draw[snake=zigzag] (0,0) -- (0,-1);
\filldraw[black] (-0.7,0.9) circle (6pt); 
\filldraw[black] (0.7,0.9) circle (6pt); 
\end{tikzpicture}
}})^{\otimes 2} + ( u_{2} + z^{ 
\scalebox{0.18}{\begin{tikzpicture}
\draw[black, thick] (-0.7,0.9) -- (0,0);
\draw[black, thick] (0.7,0.9) -- (0,0);
\draw[snake=zigzag] (0,0) -- (0,-1);
\filldraw[black] (-0.7,0.9) circle (6pt); 
\filldraw[black] (0.7,0.9) circle (6pt); 
\end{tikzpicture}
}}) \otimes z^{ 
\scalebox{0.18}{\begin{tikzpicture}
\draw[black, thick] (0.5,1) -- (0.5,0);
\filldraw[black] (0.5,1) circle (6pt); 
\end{tikzpicture}
}} + z^{ 
\scalebox{0.18}{\begin{tikzpicture}
\draw[black, thick] (0.5,1) -- (0.5,0);
\filldraw[black] (0.5,1) circle (6pt); 
\end{tikzpicture}
}} \otimes (u_{2} + z^{ 
\scalebox{0.18}{\begin{tikzpicture}
\draw[black, thick] (-0.7,0.9) -- (0,0);
\draw[black, thick] (0.7,0.9) -- (0,0);
\draw[snake=zigzag] (0,0) -- (0,-1);
\filldraw[black] (-0.7,0.9) circle (6pt); 
\filldraw[black] (0.7,0.9) circle (6pt); 
\end{tikzpicture}
}} ) \right). 
\end{equation} 
Considering \eqref{Regularity of z1}, we see that 
\begin{equation}\label{Regularity of z2}
z^{ 
\scalebox{0.18}{\begin{tikzpicture}
\draw[black, thick] (-0.7,0.9) -- (0,0);
\draw[black, thick] (0.7,0.9) -- (0,0);
\draw[snake=zigzag] (0,0) -- (0,-1);
\filldraw[black] (-0.7,0.9) circle (6pt); 
\filldraw[black] (0.7,0.9) circle (6pt); 
\end{tikzpicture}
}} \in \mathscr{C}^{\beta}_{\mathfrak{s}} \text{ for } \beta < 3-d. 
\end{equation}
The most singular term in \eqref{Define u2} is $\divergence (z^{ 
\scalebox{0.18}{\begin{tikzpicture}
\draw[black, thick] (-0.7,0.9) -- (0,0);
\draw[black, thick] (0.7,0.9) -- (0,0);
\draw[snake=zigzag] (0,0) -- (0,-1);
\filldraw[black] (-0.7,0.9) circle (6pt); 
\filldraw[black] (0.7,0.9) circle (6pt); 
\end{tikzpicture}
}} \otimes z^{ 
\scalebox{0.18}{\begin{tikzpicture}
\draw[black, thick] (0.5,1) -- (0.5,0);
\filldraw[black] (0.5,1) circle (6pt); 
\end{tikzpicture}
}})$ of regularity in $\mathscr{C}^{\beta}_{\mathfrak{s}}$ for $\beta < 3 - \frac{3d}{2}$, which allows us to conclude that $u_{2} \in \mathscr{C}^{\beta}_{\mathfrak{s}}$ for $\beta < 5 - \frac{3d}{2}$ $\mathbb{P}$-a.s. This implies that in case $d = 3$, the solution is function-valued; yet, because the products in \eqref{Define u2} are still ill-defined, we go further.
 \item We define 
\begin{equation}\label{Define z3}
\partial_{t} z^{
\scalebox{0.16}{\begin{tikzpicture}
\draw[black, thick] (-0.7,0.9) -- (0,0);
\draw[black, thick] (0.7,0.9) -- (0,0);
\draw[black, thick] (0.7,0) -- (0,-0.9);
\draw[snake=zigzag](0,0) -- (0,-0.9);
\draw[snake=zigzag](0,-0.9) -- (0,-1.8);
\filldraw[black] (-0.7,0.9) circle (7pt); 
\filldraw[black] (0.7,0.9) circle (7pt); 
\filldraw[black] (0.7,0) circle (7pt); 
\end{tikzpicture}
}} - \Delta z^{
\scalebox{0.16}{\begin{tikzpicture}
\draw[black, thick] (-0.7,0.9) -- (0,0);
\draw[black, thick] (0.7,0.9) -- (0,0);
\draw[black, thick] (0.7,0) -- (0,-0.9);
\draw[snake=zigzag](0,0) -- (0,-0.9);
\draw[snake=zigzag](0,-0.9) -- (0,-1.8);
\filldraw[black] (-0.7,0.9) circle (7pt); 
\filldraw[black] (0.7,0.9) circle (7pt); 
\filldraw[black] (0.7,0) circle (7pt); 
\end{tikzpicture}
}} = - \mathbb{P}_{L} \divergence \left( z^{ 
\scalebox{0.18}{\begin{tikzpicture}
\draw[black, thick] (-0.7,0.9) -- (0,0);
\draw[black, thick] (0.7,0.9) -- (0,0);
\draw[snake=zigzag] (0,0) -- (0,-1);
\filldraw[black] (-0.7,0.9) circle (6pt); 
\filldraw[black] (0.7,0.9) circle (6pt); 
\end{tikzpicture}
}} \otimes z^{ 
\scalebox{0.18}{\begin{tikzpicture}
\draw[black, thick] (0.5,1) -- (0.5,0);
\filldraw[black] (0.5,1) circle (6pt); 
\end{tikzpicture}
}} + z^{ 
\scalebox{0.18}{\begin{tikzpicture}
\draw[black, thick] (0.5,1) -- (0.5,0);
\filldraw[black] (0.5,1) circle (6pt); 
\end{tikzpicture}
}} \otimes z^{ 
\scalebox{0.18}{\begin{tikzpicture}
\draw[black, thick] (-0.7,0.9) -- (0,0);
\draw[black, thick] (0.7,0.9) -- (0,0);
\draw[snake=zigzag] (0,0) -- (0,-1);
\filldraw[black] (-0.7,0.9) circle (6pt); 
\filldraw[black] (0.7,0.9) circle (6pt); 
\end{tikzpicture}
}} \right), \hspace{3mm} z^{
\scalebox{0.16}{\begin{tikzpicture}
\draw[black, thick] (-0.7,0.9) -- (0,0);
\draw[black, thick] (0.7,0.9) -- (0,0);
\draw[black, thick] (0.7,0) -- (0,-0.9);
\draw[snake=zigzag](0,0) -- (0,-0.9);
\draw[snake=zigzag](0,-0.9) -- (0,-1.8);
\filldraw[black] (-0.7,0.9) circle (7pt); 
\filldraw[black] (0.7,0.9) circle (7pt); 
\filldraw[black] (0.7,0) circle (7pt); 
\end{tikzpicture}
}} \rvert_{t=0} = 0, 
\end{equation} 
so that $u_{3} \triangleq u_{2} - z^{
\scalebox{0.16}{\begin{tikzpicture}
\draw[black, thick] (-0.7,0.9) -- (0,0);
\draw[black, thick] (0.7,0.9) -- (0,0);
\draw[black, thick] (0.7,0) -- (0,-0.9);
\draw[snake=zigzag](0,0) -- (0,-0.9);
\draw[snake=zigzag](0,-0.9) -- (0,-1.8);
\filldraw[black] (-0.7,0.9) circle (7pt); 
\filldraw[black] (0.7,0.9) circle (7pt); 
\filldraw[black] (0.7,0) circle (7pt); 
\end{tikzpicture}
}}$ satisfies 
\begin{equation}\label{Define u3} 
\partial_{t} u_{3} - \Delta u_{3} = - \mathbb{P}_{L} \divergence \left( ( u_{3} + z^{ 
\scalebox{0.18}{\begin{tikzpicture}
\draw[black, thick] (-0.7,0.9) -- (0,0);
\draw[black, thick] (0.7,0.9) -- (0,0);
\draw[snake=zigzag] (0,0) -- (0,-1);
\filldraw[black] (-0.7,0.9) circle (6pt); 
\filldraw[black] (0.7,0.9) circle (6pt); 
\end{tikzpicture}
}} + z^{
\scalebox{0.16}{\begin{tikzpicture}
\draw[black, thick] (-0.7,0.9) -- (0,0);
\draw[black, thick] (0.7,0.9) -- (0,0);
\draw[black, thick] (0.7,0) -- (0,-0.9);
\draw[snake=zigzag](0,0) -- (0,-0.9);
\draw[snake=zigzag](0,-0.9) -- (0,-1.8);
\filldraw[black] (-0.7,0.9) circle (7pt); 
\filldraw[black] (0.7,0.9) circle (7pt); 
\filldraw[black] (0.7,0) circle (7pt); 
\end{tikzpicture}
}} )^{\otimes 2} + (u_{3} + z^{
\scalebox{0.16}{\begin{tikzpicture}
\draw[black, thick] (-0.7,0.9) -- (0,0);
\draw[black, thick] (0.7,0.9) -- (0,0);
\draw[black, thick] (0.7,0) -- (0,-0.9);
\draw[snake=zigzag](0,0) -- (0,-0.9);
\draw[snake=zigzag](0,-0.9) -- (0,-1.8);
\filldraw[black] (-0.7,0.9) circle (7pt); 
\filldraw[black] (0.7,0.9) circle (7pt); 
\filldraw[black] (0.7,0) circle (7pt); 
\end{tikzpicture}
}} ) \otimes z^{ 
\scalebox{0.18}{\begin{tikzpicture}
\draw[black, thick] (0.5,1) -- (0.5,0);
\filldraw[black] (0.5,1) circle (6pt); 
\end{tikzpicture}
}} + z^{ 
\scalebox{0.18}{\begin{tikzpicture}
\draw[black, thick] (0.5,1) -- (0.5,0);
\filldraw[black] (0.5,1) circle (6pt); 
\end{tikzpicture}
}} \otimes (u_{3} + z^{
\scalebox{0.16}{\begin{tikzpicture}
\draw[black, thick] (-0.7,0.9) -- (0,0);
\draw[black, thick] (0.7,0.9) -- (0,0);
\draw[black, thick] (0.7,0) -- (0,-0.9);
\draw[snake=zigzag](0,0) -- (0,-0.9);
\draw[snake=zigzag](0,-0.9) -- (0,-1.8);
\filldraw[black] (-0.7,0.9) circle (7pt); 
\filldraw[black] (0.7,0.9) circle (7pt); 
\filldraw[black] (0.7,0) circle (7pt); 
\end{tikzpicture}
}} ) \right). 
\end{equation} 
From the previous observation that $\divergence (z^{ 
\scalebox{0.18}{\begin{tikzpicture}
\draw[black, thick] (-0.7,0.9) -- (0,0);
\draw[black, thick] (0.7,0.9) -- (0,0);
\draw[snake=zigzag] (0,0) -- (0,-1);
\filldraw[black] (-0.7,0.9) circle (6pt); 
\filldraw[black] (0.7,0.9) circle (6pt); 
\end{tikzpicture}
}} \otimes z^{ 
\scalebox{0.18}{\begin{tikzpicture}
\draw[black, thick] (0.5,1) -- (0.5,0);
\filldraw[black] (0.5,1) circle (6pt); 
\end{tikzpicture}
}})$ has regularity of $\mathscr{C}^{\beta}_{\mathfrak{s}}$ for $\beta < 3 - \frac{3d}{2}$, we deduce that 
\begin{equation}\label{Regularity of z3}  
z^{
\scalebox{0.16}{\begin{tikzpicture}
\draw[black, thick] (-0.7,0.9) -- (0,0);
\draw[black, thick] (0.7,0.9) -- (0,0);
\draw[black, thick] (0.7,0) -- (0,-0.9);
\draw[snake=zigzag](0,0) -- (0,-0.9);
\draw[snake=zigzag](0,-0.9) -- (0,-1.8);
\filldraw[black] (-0.7,0.9) circle (7pt); 
\filldraw[black] (0.7,0.9) circle (7pt); 
\filldraw[black] (0.7,0) circle (7pt); 
\end{tikzpicture}
}} \in \mathscr{C}^{\beta}_{\mathfrak{s}} \text{ for } \beta < 5 - \frac{3d}{2}. 
\end{equation} 
The singular products of \eqref{Define u3} are particularly $\divergence (z^{ 
\scalebox{0.18}{\begin{tikzpicture}
\draw[black, thick] (-0.7,0.9) -- (0,0);
\draw[black, thick] (0.7,0.9) -- (0,0);
\draw[snake=zigzag] (0,0) -- (0,-1);
\filldraw[black] (-0.7,0.9) circle (6pt); 
\filldraw[black] (0.7,0.9) circle (6pt); 
\end{tikzpicture}
}})^{\otimes 2}$ and $\divergence ( z^{
\scalebox{0.16}{\begin{tikzpicture}
\draw[black, thick] (-0.7,0.9) -- (0,0);
\draw[black, thick] (0.7,0.9) -- (0,0);
\draw[black, thick] (0.7,0) -- (0,-0.9);
\draw[snake=zigzag](0,0) -- (0,-0.9);
\draw[snake=zigzag](0,-0.9) -- (0,-1.8);
\filldraw[black] (-0.7,0.9) circle (7pt); 
\filldraw[black] (0.7,0.9) circle (7pt); 
\filldraw[black] (0.7,0) circle (7pt); 
\end{tikzpicture}
}} \otimes z^{ 
\scalebox{0.18}{\begin{tikzpicture}
\draw[black, thick] (0.5,1) -- (0.5,0);
\filldraw[black] (0.5,1) circle (6pt); 
\end{tikzpicture}
}})$, both of which belong to $\mathscr{C}^{\beta}_{\mathfrak{s}}$ for $\beta < 5 - 2d$ $\mathbb{P}$-a.s. This is where the program in \cite{ZZ15} can stop and consider a solution 
\begin{equation*}
u  = z^{ 
\scalebox{0.18}{\begin{tikzpicture}
\draw[black, thick] (0.5,1) -- (0.5,0);
\filldraw[black] (0.5,1) circle (6pt); 
\end{tikzpicture}
}} + z^{ 
\scalebox{0.18}{\begin{tikzpicture}
\draw[black, thick] (-0.7,0.9) -- (0,0);
\draw[black, thick] (0.7,0.9) -- (0,0);
\draw[snake=zigzag] (0,0) -- (0,-1);
\filldraw[black] (-0.7,0.9) circle (6pt); 
\filldraw[black] (0.7,0.9) circle (6pt); 
\end{tikzpicture}
}} + z^{
\scalebox{0.16}{\begin{tikzpicture}
\draw[black, thick] (-0.7,0.9) -- (0,0);
\draw[black, thick] (0.7,0.9) -- (0,0);
\draw[black, thick] (0.7,0) -- (0,-0.9);
\draw[snake=zigzag](0,0) -- (0,-0.9);
\draw[snake=zigzag](0,-0.9) -- (0,-1.8);
\filldraw[black] (-0.7,0.9) circle (7pt); 
\filldraw[black] (0.7,0.9) circle (7pt); 
\filldraw[black] (0.7,0) circle (7pt); 
\end{tikzpicture}
}} + u_{3}. 
\end{equation*} 
\item To clarify where it goes wrong in the case $d =4$, we can take one step further and consider 
\begin{equation}\label{Define z4}
\partial_{t} z^{
\scalebox{0.14}{\begin{tikzpicture}
\filldraw[black] (-1.7,1.8) circle (7pt); 
\filldraw[black] (-0.5,1.8) circle (7pt); 
\filldraw[black] (0.5,1.8) circle (7pt); 
\filldraw[black] (1.7,1.8) circle (7pt); 
\draw[black, thick] (-1.1,0.9) -- (-1.7,1.8);
\draw[black, thick] (-1.1,0.9) -- (-0.5,1.8);
\draw[black, thick] (1.1,0.9) -- (1.7,1.8);
\draw[black, thick] (1.1,0.9) -- (0.5,1.8);
\draw[snake=zigzag](-1.1,0.9) -- (0,0);
\draw[snake=zigzag](1.1,0.9) -- (0,0);
\draw[snake=zigzag](0,0) -- (0,-0.8);
\end{tikzpicture}
}} - \Delta z^{
\scalebox{0.14}{\begin{tikzpicture}
\filldraw[black] (-1.7,1.8) circle (7pt); 
\filldraw[black] (-0.5,1.8) circle (7pt); 
\filldraw[black] (0.5,1.8) circle (7pt); 
\filldraw[black] (1.7,1.8) circle (7pt); 
\draw[black, thick] (-1.1,0.9) -- (-1.7,1.8);
\draw[black, thick] (-1.1,0.9) -- (-0.5,1.8);
\draw[black, thick] (1.1,0.9) -- (1.7,1.8);
\draw[black, thick] (1.1,0.9) -- (0.5,1.8);
\draw[snake=zigzag](-1.1,0.9) -- (0,0);
\draw[snake=zigzag](1.1,0.9) -- (0,0);
\draw[snake=zigzag](0,0) -- (0,-0.8);
\end{tikzpicture}
}} = - \mathbb{P}_{L} \divergence \left(  (z^{ 
\scalebox{0.18}{\begin{tikzpicture}
\draw[black, thick] (-0.7,0.9) -- (0,0);
\draw[black, thick] (0.7,0.9) -- (0,0);
\draw[snake=zigzag] (0,0) -- (0,-1);
\filldraw[black] (-0.7,0.9) circle (6pt); 
\filldraw[black] (0.7,0.9) circle (6pt); 
\end{tikzpicture}
}})^{\otimes 2} + 
z^{
\scalebox{0.16}{\begin{tikzpicture}
\draw[black, thick] (-0.7,0.9) -- (0,0);
\draw[black, thick] (0.7,0.9) -- (0,0);
\draw[black, thick] (0.7,0) -- (0,-0.9);
\draw[snake=zigzag](0,0) -- (0,-0.9);
\draw[snake=zigzag](0,-0.9) -- (0,-1.8);
\filldraw[black] (-0.7,0.9) circle (7pt); 
\filldraw[black] (0.7,0.9) circle (7pt); 
\filldraw[black] (0.7,0) circle (7pt); 
\end{tikzpicture}
}} \otimes z^{ 
\scalebox{0.18}{\begin{tikzpicture}
\draw[black, thick] (0.5,1) -- (0.5,0);
\filldraw[black] (0.5,1) circle (6pt); 
\end{tikzpicture}
}} + z^{ 
\scalebox{0.18}{\begin{tikzpicture}
\draw[black, thick] (0.5,1) -- (0.5,0);
\filldraw[black] (0.5,1) circle (6pt); 
\end{tikzpicture}
}} \otimes z^{
\scalebox{0.16}{\begin{tikzpicture}
\draw[black, thick] (-0.7,0.9) -- (0,0);
\draw[black, thick] (0.7,0.9) -- (0,0);
\draw[black, thick] (0.7,0) -- (0,-0.9);
\draw[snake=zigzag](0,0) -- (0,-0.9);
\draw[snake=zigzag](0,-0.9) -- (0,-1.8);
\filldraw[black] (-0.7,0.9) circle (7pt); 
\filldraw[black] (0.7,0.9) circle (7pt); 
\filldraw[black] (0.7,0) circle (7pt); 
\end{tikzpicture}
}} \right), \hspace{3mm} z^{
\scalebox{0.14}{\begin{tikzpicture}
\filldraw[black] (-1.7,1.8) circle (7pt); 
\filldraw[black] (-0.5,1.8) circle (7pt); 
\filldraw[black] (0.5,1.8) circle (7pt); 
\filldraw[black] (1.7,1.8) circle (7pt); 
\draw[black, thick] (-1.1,0.9) -- (-1.7,1.8);
\draw[black, thick] (-1.1,0.9) -- (-0.5,1.8);
\draw[black, thick] (1.1,0.9) -- (1.7,1.8);
\draw[black, thick] (1.1,0.9) -- (0.5,1.8);
\draw[snake=zigzag](-1.1,0.9) -- (0,0);
\draw[snake=zigzag](1.1,0.9) -- (0,0);
\draw[snake=zigzag](0,0) -- (0,-0.8);
\end{tikzpicture}
}} \rvert_{t=0} = 0, 
\end{equation} 
so that $u_{4} \triangleq u_{3} - z^{
\scalebox{0.14}{\begin{tikzpicture}
\filldraw[black] (-1.7,1.8) circle (7pt); 
\filldraw[black] (-0.5,1.8) circle (7pt); 
\filldraw[black] (0.5,1.8) circle (7pt); 
\filldraw[black] (1.7,1.8) circle (7pt); 
\draw[black, thick] (-1.1,0.9) -- (-1.7,1.8);
\draw[black, thick] (-1.1,0.9) -- (-0.5,1.8);
\draw[black, thick] (1.1,0.9) -- (1.7,1.8);
\draw[black, thick] (1.1,0.9) -- (0.5,1.8);
\draw[snake=zigzag](-1.1,0.9) -- (0,0);
\draw[snake=zigzag](1.1,0.9) -- (0,0);
\draw[snake=zigzag](0,0) -- (0,-0.8);
\end{tikzpicture}
}}$ satisfy 
\begin{align}\label{Define u4}
&\partial_{t} u_{4} - \Delta u_{4}  \nonumber \\
=& - \mathbb{P}_{L} \Bigg( \left( u_{4} + z^{
\scalebox{0.14}{\begin{tikzpicture}
\filldraw[black] (-1.7,1.8) circle (7pt); 
\filldraw[black] (-0.5,1.8) circle (7pt); 
\filldraw[black] (0.5,1.8) circle (7pt); 
\filldraw[black] (1.7,1.8) circle (7pt); 
\draw[black, thick] (-1.1,0.9) -- (-1.7,1.8);
\draw[black, thick] (-1.1,0.9) -- (-0.5,1.8);
\draw[black, thick] (1.1,0.9) -- (1.7,1.8);
\draw[black, thick] (1.1,0.9) -- (0.5,1.8);
\draw[snake=zigzag](-1.1,0.9) -- (0,0);
\draw[snake=zigzag](1.1,0.9) -- (0,0);
\draw[snake=zigzag](0,0) -- (0,-0.8);
\end{tikzpicture}
}} + z^{
\scalebox{0.16}{\begin{tikzpicture}
\draw[black, thick] (-0.7,0.9) -- (0,0);
\draw[black, thick] (0.7,0.9) -- (0,0);
\draw[black, thick] (0.7,0) -- (0,-0.9);
\draw[snake=zigzag](0,0) -- (0,-0.9);
\draw[snake=zigzag](0,-0.9) -- (0,-1.8);
\filldraw[black] (-0.7,0.9) circle (7pt); 
\filldraw[black] (0.7,0.9) circle (7pt); 
\filldraw[black] (0.7,0) circle (7pt); 
\end{tikzpicture}
}} \right)^{\otimes 2} + \left( u_{4} + z^{
\scalebox{0.14}{\begin{tikzpicture}
\filldraw[black] (-1.7,1.8) circle (7pt); 
\filldraw[black] (-0.5,1.8) circle (7pt); 
\filldraw[black] (0.5,1.8) circle (7pt); 
\filldraw[black] (1.7,1.8) circle (7pt); 
\draw[black, thick] (-1.1,0.9) -- (-1.7,1.8);
\draw[black, thick] (-1.1,0.9) -- (-0.5,1.8);
\draw[black, thick] (1.1,0.9) -- (1.7,1.8);
\draw[black, thick] (1.1,0.9) -- (0.5,1.8);
\draw[snake=zigzag](-1.1,0.9) -- (0,0);
\draw[snake=zigzag](1.1,0.9) -- (0,0);
\draw[snake=zigzag](0,0) -- (0,-0.8);
\end{tikzpicture}
}} + z^{
\scalebox{0.16}{\begin{tikzpicture}
\draw[black, thick] (-0.7,0.9) -- (0,0);
\draw[black, thick] (0.7,0.9) -- (0,0);
\draw[black, thick] (0.7,0) -- (0,-0.9);
\draw[snake=zigzag](0,0) -- (0,-0.9);
\draw[snake=zigzag](0,-0.9) -- (0,-1.8);
\filldraw[black] (-0.7,0.9) circle (7pt); 
\filldraw[black] (0.7,0.9) circle (7pt); 
\filldraw[black] (0.7,0) circle (7pt); 
\end{tikzpicture}
}} \right) \otimes z^{ 
\scalebox{0.18}{\begin{tikzpicture}
\draw[black, thick] (-0.7,0.9) -- (0,0);
\draw[black, thick] (0.7,0.9) -- (0,0);
\draw[snake=zigzag] (0,0) -- (0,-1);
\filldraw[black] (-0.7,0.9) circle (6pt); 
\filldraw[black] (0.7,0.9) circle (6pt); 
\end{tikzpicture}
}} + z^{ 
\scalebox{0.18}{\begin{tikzpicture}
\draw[black, thick] (-0.7,0.9) -- (0,0);
\draw[black, thick] (0.7,0.9) -- (0,0);
\draw[snake=zigzag] (0,0) -- (0,-1);
\filldraw[black] (-0.7,0.9) circle (6pt); 
\filldraw[black] (0.7,0.9) circle (6pt); 
\end{tikzpicture}
}} \otimes \left( u_{4} + z^{
\scalebox{0.14}{\begin{tikzpicture}
\filldraw[black] (-1.7,1.8) circle (7pt); 
\filldraw[black] (-0.5,1.8) circle (7pt); 
\filldraw[black] (0.5,1.8) circle (7pt); 
\filldraw[black] (1.7,1.8) circle (7pt); 
\draw[black, thick] (-1.1,0.9) -- (-1.7,1.8);
\draw[black, thick] (-1.1,0.9) -- (-0.5,1.8);
\draw[black, thick] (1.1,0.9) -- (1.7,1.8);
\draw[black, thick] (1.1,0.9) -- (0.5,1.8);
\draw[snake=zigzag](-1.1,0.9) -- (0,0);
\draw[snake=zigzag](1.1,0.9) -- (0,0);
\draw[snake=zigzag](0,0) -- (0,-0.8);
\end{tikzpicture}
}} + z^{
\scalebox{0.16}{\begin{tikzpicture}
\draw[black, thick] (-0.7,0.9) -- (0,0);
\draw[black, thick] (0.7,0.9) -- (0,0);
\draw[black, thick] (0.7,0) -- (0,-0.9);
\draw[snake=zigzag](0,0) -- (0,-0.9);
\draw[snake=zigzag](0,-0.9) -- (0,-1.8);
\filldraw[black] (-0.7,0.9) circle (7pt); 
\filldraw[black] (0.7,0.9) circle (7pt); 
\filldraw[black] (0.7,0) circle (7pt); 
\end{tikzpicture}
}} \right)   \nonumber \\
&+ \left( u_{4} + z^{
\scalebox{0.14}{\begin{tikzpicture}
\filldraw[black] (-1.7,1.8) circle (7pt); 
\filldraw[black] (-0.5,1.8) circle (7pt); 
\filldraw[black] (0.5,1.8) circle (7pt); 
\filldraw[black] (1.7,1.8) circle (7pt); 
\draw[black, thick] (-1.1,0.9) -- (-1.7,1.8);
\draw[black, thick] (-1.1,0.9) -- (-0.5,1.8);
\draw[black, thick] (1.1,0.9) -- (1.7,1.8);
\draw[black, thick] (1.1,0.9) -- (0.5,1.8);
\draw[snake=zigzag](-1.1,0.9) -- (0,0);
\draw[snake=zigzag](1.1,0.9) -- (0,0);
\draw[snake=zigzag](0,0) -- (0,-0.8);
\end{tikzpicture}
}}  \right) \otimes z^{ 
\scalebox{0.18}{\begin{tikzpicture}
\draw[black, thick] (0.5,1) -- (0.5,0);
\filldraw[black] (0.5,1) circle (6pt); 
\end{tikzpicture}
}}+ z^{ 
\scalebox{0.18}{\begin{tikzpicture}
\draw[black, thick] (0.5,1) -- (0.5,0);
\filldraw[black] (0.5,1) circle (6pt); 
\end{tikzpicture}
}} \otimes  \left( u_{4} + z^{
\scalebox{0.14}{\begin{tikzpicture}
\filldraw[black] (-1.7,1.8) circle (7pt); 
\filldraw[black] (-0.5,1.8) circle (7pt); 
\filldraw[black] (0.5,1.8) circle (7pt); 
\filldraw[black] (1.7,1.8) circle (7pt); 
\draw[black, thick] (-1.1,0.9) -- (-1.7,1.8);
\draw[black, thick] (-1.1,0.9) -- (-0.5,1.8);
\draw[black, thick] (1.1,0.9) -- (1.7,1.8);
\draw[black, thick] (1.1,0.9) -- (0.5,1.8);
\draw[snake=zigzag](-1.1,0.9) -- (0,0);
\draw[snake=zigzag](1.1,0.9) -- (0,0);
\draw[snake=zigzag](0,0) -- (0,-0.8);
\end{tikzpicture}
}}  \right) \Bigg).
\end{align}
The previous observation that $\divergence (z^{ 
\scalebox{0.18}{\begin{tikzpicture}
\draw[black, thick] (-0.7,0.9) -- (0,0);
\draw[black, thick] (0.7,0.9) -- (0,0);
\draw[snake=zigzag] (0,0) -- (0,-1);
\filldraw[black] (-0.7,0.9) circle (6pt); 
\filldraw[black] (0.7,0.9) circle (6pt); 
\end{tikzpicture}
}})^{\otimes 2}$, $\divergence ( z^{
\scalebox{0.16}{\begin{tikzpicture}
\draw[black, thick] (-0.7,0.9) -- (0,0);
\draw[black, thick] (0.7,0.9) -- (0,0);
\draw[black, thick] (0.7,0) -- (0,-0.9);
\draw[snake=zigzag](0,0) -- (0,-0.9);
\draw[snake=zigzag](0,-0.9) -- (0,-1.8);
\filldraw[black] (-0.7,0.9) circle (7pt); 
\filldraw[black] (0.7,0.9) circle (7pt); 
\filldraw[black] (0.7,0) circle (7pt); 
\end{tikzpicture}
}} \otimes z^{ 
\scalebox{0.18}{\begin{tikzpicture}
\draw[black, thick] (0.5,1) -- (0.5,0);
\filldraw[black] (0.5,1) circle (6pt); 
\end{tikzpicture}
}})$, and $\divergence( z^{ 
\scalebox{0.18}{\begin{tikzpicture}
\draw[black, thick] (0.5,1) -- (0.5,0);
\filldraw[black] (0.5,1) circle (6pt); 
\end{tikzpicture}
}} \otimes z^{
\scalebox{0.14}{\begin{tikzpicture}
\filldraw[black] (-1.7,1.8) circle (7pt); 
\filldraw[black] (-0.5,1.8) circle (7pt); 
\filldraw[black] (0.5,1.8) circle (7pt); 
\filldraw[black] (1.7,1.8) circle (7pt); 
\draw[black, thick] (-1.1,0.9) -- (-1.7,1.8);
\draw[black, thick] (-1.1,0.9) -- (-0.5,1.8);
\draw[black, thick] (1.1,0.9) -- (1.7,1.8);
\draw[black, thick] (1.1,0.9) -- (0.5,1.8);
\draw[snake=zigzag](-1.1,0.9) -- (0,0);
\draw[snake=zigzag](1.1,0.9) -- (0,0);
\draw[snake=zigzag](0,0) -- (0,-0.8);
\end{tikzpicture}
}})$ all have regularity of $\mathscr{C}^{\beta}_{\mathfrak{s}}$ for $\beta < 5 - 2d$, indicate that 
\begin{equation}\label{regularity of z4}
z^{
\scalebox{0.14}{\begin{tikzpicture}
\filldraw[black] (-1.7,1.8) circle (7pt); 
\filldraw[black] (-0.5,1.8) circle (7pt); 
\filldraw[black] (0.5,1.8) circle (7pt); 
\filldraw[black] (1.7,1.8) circle (7pt); 
\draw[black, thick] (-1.1,0.9) -- (-1.7,1.8);
\draw[black, thick] (-1.1,0.9) -- (-0.5,1.8);
\draw[black, thick] (1.1,0.9) -- (1.7,1.8);
\draw[black, thick] (1.1,0.9) -- (0.5,1.8);
\draw[snake=zigzag](-1.1,0.9) -- (0,0);
\draw[snake=zigzag](1.1,0.9) -- (0,0);
\draw[snake=zigzag](0,0) -- (0,-0.8);
\end{tikzpicture}
}} \in \mathscr{C}^{\beta}_{\mathfrak{s}}\text{ for }\beta < 7 - 2d \hspace{1mm}\mathbb{P}\text{-a.s.}
\end{equation} 
The most singular terms in \eqref{Define u4} are 
\begin{align*}
\divergence ( z^{ 
\scalebox{0.18}{\begin{tikzpicture}
\draw[black, thick] (-0.7,0.9) -- (0,0);
\draw[black, thick] (0.7,0.9) -- (0,0);
\draw[snake=zigzag] (0,0) -- (0,-1);
\filldraw[black] (-0.7,0.9) circle (6pt); 
\filldraw[black] (0.7,0.9) circle (6pt); 
\end{tikzpicture}
}} \otimes z^{
\scalebox{0.16}{\begin{tikzpicture}
\draw[black, thick] (-0.7,0.9) -- (0,0);
\draw[black, thick] (0.7,0.9) -- (0,0);
\draw[black, thick] (0.7,0) -- (0,-0.9);
\draw[snake=zigzag](0,0) -- (0,-0.9);
\draw[snake=zigzag](0,-0.9) -- (0,-1.8);
\filldraw[black] (-0.7,0.9) circle (7pt); 
\filldraw[black] (0.7,0.9) circle (7pt); 
\filldraw[black] (0.7,0) circle (7pt); 
\end{tikzpicture}
}}), \hspace{3mm} \divergence (z^{
\scalebox{0.16}{\begin{tikzpicture}
\draw[black, thick] (-0.7,0.9) -- (0,0);
\draw[black, thick] (0.7,0.9) -- (0,0);
\draw[black, thick] (0.7,0) -- (0,-0.9);
\draw[snake=zigzag](0,0) -- (0,-0.9);
\draw[snake=zigzag](0,-0.9) -- (0,-1.8);
\filldraw[black] (-0.7,0.9) circle (7pt); 
\filldraw[black] (0.7,0.9) circle (7pt); 
\filldraw[black] (0.7,0) circle (7pt); 
\end{tikzpicture}
}} \otimes z^{ 
\scalebox{0.18}{\begin{tikzpicture}
\draw[black, thick] (-0.7,0.9) -- (0,0);
\draw[black, thick] (0.7,0.9) -- (0,0);
\draw[snake=zigzag] (0,0) -- (0,-0.7);
\filldraw[black] (-0.7,0.9) circle (6pt); 
\filldraw[black] (0.7,0.9) circle (6pt); 
\end{tikzpicture}
}}), \hspace{3mm} \divergence (z^{
\scalebox{0.14}{\begin{tikzpicture}
\filldraw[black] (-1.7,1.8) circle (7pt); 
\filldraw[black] (-0.5,1.8) circle (7pt); 
\filldraw[black] (0.5,1.8) circle (7pt); 
\filldraw[black] (1.7,1.8) circle (7pt); 
\draw[black, thick] (-1.1,0.9) -- (-1.7,1.8);
\draw[black, thick] (-1.1,0.9) -- (-0.5,1.8);
\draw[black, thick] (1.1,0.9) -- (1.7,1.8);
\draw[black, thick] (1.1,0.9) -- (0.5,1.8);
\draw[snake=zigzag](-1.1,0.9) -- (0,0);
\draw[snake=zigzag](1.1,0.9) -- (0,0);
\draw[snake=zigzag](0,0) -- (0,-0.8);
\end{tikzpicture}
}}\otimes z^{ 
\scalebox{0.18}{\begin{tikzpicture}
\draw[black, thick] (0.5,1) -- (0.5,0);
\filldraw[black] (0.5,1) circle (6pt); 
\end{tikzpicture}
}}), \hspace{1mm} \text{ and } \hspace{1mm} \divergence (z^{ 
\scalebox{0.18}{\begin{tikzpicture}
\draw[black, thick] (0.5,1) -- (0.5,0);
\filldraw[black] (0.5,1) circle (6pt); 
\end{tikzpicture}
}} \otimes z^{
\scalebox{0.14}{\begin{tikzpicture}
\filldraw[black] (-1.7,1.8) circle (7pt); 
\filldraw[black] (-0.5,1.8) circle (7pt); 
\filldraw[black] (0.5,1.8) circle (7pt); 
\filldraw[black] (1.7,1.8) circle (7pt); 
\draw[black, thick] (-1.1,0.9) -- (-1.7,1.8);
\draw[black, thick] (-1.1,0.9) -- (-0.5,1.8);
\draw[black, thick] (1.1,0.9) -- (1.7,1.8);
\draw[black, thick] (1.1,0.9) -- (0.5,1.8);
\draw[snake=zigzag](-1.1,0.9) -- (0,0);
\draw[snake=zigzag](1.1,0.9) -- (0,0);
\draw[snake=zigzag](0,0) -- (0,-0.8);
\end{tikzpicture}
}}), 
\end{align*}
all of which have homogeneity of $7 - \frac{5d}{2}$, implying that $u_{4} \in \mathscr{C}^{\beta}_{\mathfrak{s}}$ for $\beta < 9 - \frac{5d}{2}$, once again failing to be a function for $d =4$ (in fact, for all $d\geq 4$). 
\end{enumerate} 

Denoting for simplicity 
\begin{align*}
z_{1} \triangleq z^{ 
\scalebox{0.18}{\begin{tikzpicture}
\draw[black, thick] (0.5,1) -- (0.5,0);
\filldraw[black] (0.5,1) circle (6pt); 
\end{tikzpicture}
}}, \hspace{2mm} z_{2} \triangleq z^{ 
\scalebox{0.18}{\begin{tikzpicture}
\draw[black, thick] (-0.7,0.9) -- (0,0);
\draw[black, thick] (0.7,0.9) -- (0,0);
\draw[snake=zigzag] (0,0) -- (0,-1);
\filldraw[black] (-0.7,0.9) circle (6pt); 
\filldraw[black] (0.7,0.9) circle (6pt); 
\end{tikzpicture}
}},\hspace{2mm}  z_{3} \triangleq z^{
\scalebox{0.16}{\begin{tikzpicture}
\draw[black, thick] (-0.7,0.9) -- (0,0);
\draw[black, thick] (0.7,0.9) -- (0,0);
\draw[black, thick] (0.7,0) -- (0,-0.9);
\draw[snake=zigzag](0,0) -- (0,-0.9);
\draw[snake=zigzag](0,-0.9) -- (0,-1.8);
\filldraw[black] (-0.7,0.9) circle (7pt); 
\filldraw[black] (0.7,0.9) circle (7pt); 
\filldraw[black] (0.7,0) circle (7pt); 
\end{tikzpicture}
}},  \hspace{1mm} \text{ and } \hspace{1mm} z_{4} \triangleq z^{
\scalebox{0.14}{\begin{tikzpicture}
\filldraw[black] (-1.7,1.8) circle (7pt); 
\filldraw[black] (-0.5,1.8) circle (7pt); 
\filldraw[black] (0.5,1.8) circle (7pt); 
\filldraw[black] (1.7,1.8) circle (7pt); 
\draw[black, thick] (-1.1,0.9) -- (-1.7,1.8);
\draw[black, thick] (-1.1,0.9) -- (-0.5,1.8);
\draw[black, thick] (1.1,0.9) -- (1.7,1.8);
\draw[black, thick] (1.1,0.9) -- (0.5,1.8);
\draw[snake=zigzag](-1.1,0.9) -- (0,0);
\draw[snake=zigzag](1.1,0.9) -- (0,0);
\draw[snake=zigzag](0,0) -- (0,-0.8);
\end{tikzpicture}
}}, 
\end{align*}
let us collect our observations concerning the regularity of $z_{k}$'s because the process stops when all the products are \emph{almost} well-defined. Informally this requires $z_{k}$ to have the regularity of $\mathscr{C}^{\beta}_{\mathfrak{s}}$ for $\beta \geq 0$ because otherwise, we will need to continue to subtract out $z_{k} \otimes z_{k}$. 
\begin{table}[H]
\begin{tabular}{ |p{0.3cm}||p{8cm}|p{3cm}|p{1cm}}
 \hline
$k$ & The most singular term in $z_{k}$-equation, its regularity $\mathscr{C}^{\beta}_{\mathfrak{s}}$ & Regularity $\mathscr{C}^{\beta}_{\mathfrak{s}}$ of $z_{k}$  \\
 \hline
1 &  $\xi, \hspace{59mm} \beta < -\frac{d+2}{2}$& $ \beta < 1-\frac{d}{2}$  \\
\hline
2 &  $\divergence (z_{1})^{\otimes 2}, \hspace{48mm} \beta < 1-d$& $\beta < 3-d$  \\
\hline
3& $\divergence (z_{2} \otimes z_{1} + z_{1} \otimes z_{2}), \hspace{31mm} \beta < 3 - \frac{3d}{2}$ & $\beta < 5- \frac{3d}{2}$  \\
\hline
4 & $\divergence (z_{2}^{\otimes 2} + z_{3} \otimes z_{1} + z_{1} \otimes z_{3}), \hspace{23mm} \beta < 5 - 2d$ & $\beta < 7 - 2d$\\
\hline 
\end{tabular}
\end{table}
\noindent At every step, the regularity of $z_{k}$ seems to get better by $2 - \frac{d}{2}$, which in fact reveals that in case $d = 4$, the regularity of $z_{k}$ is not getting better. This is why, informally, we say that ``\emph{the trees become infinite}'' as the locally subcritical case approaches the critical case. In locally critical or supercritical cases, such iterations do not lead to any gain in regularity. 

Another important follow-up concerning this typical method of constructing a solution theory is the need to subtract out renormalization constants. One can directly solve \eqref{Define z1} so that 
\begin{equation}\label{Explicit solution}
z^{ 
\scalebox{0.18}{\begin{tikzpicture}
\draw[black, thick] (0.5,1) -- (0.5,0);
\filldraw[black] (0.5,1) circle (6pt); 
\end{tikzpicture}
}}(t) = \int_{0}^{t} e^{(t-s) \Delta} \xi(s) ds. 
\end{equation}
Informally stated, this implies that $z^{ 
\scalebox{0.18}{\begin{tikzpicture}
\draw[black, thick] (-0.7,0.9) -- (0,0);
\draw[black, thick] (0.7,0.9) -- (0,0);
\draw[snake=zigzag] (0,0) -- (0,-0.7);
\filldraw[black] (-0.7,0.9) circle (6pt); 
\filldraw[black] (0.7,0.9) circle (6pt); 
\end{tikzpicture}
}}$, according to \eqref{Define z2}, consists of a product of the STWN $\xi$, $z^{
\scalebox{0.16}{\begin{tikzpicture}
\draw[black, thick] (-0.7,0.9) -- (0,0);
\draw[black, thick] (0.7,0.9) -- (0,0);
\draw[black, thick] (0.7,0) -- (0,-0.9);
\draw[snake=zigzag](0,0) -- (0,-0.9);
\draw[snake=zigzag](0,-0.9) -- (0,-1.8);
\filldraw[black] (-0.7,0.9) circle (7pt); 
\filldraw[black] (0.7,0.9) circle (7pt); 
\filldraw[black] (0.7,0) circle (7pt); 
\end{tikzpicture}
}}$ has a cubic of the STWN $\xi$, and consequently the equation of $u_{3}$ has plenty of various products of the STWN. Such products must be computed using Wick products (e.g. \cite{J97}), and they lead to the appropriate choices of renormalization constants (see \cite{Y21c} for details), typically as the zeroth order Wiener chaos (e.g. \cite[Equations (180), (184), and (192)]{Y21a} or \cite[Equation (5.7)]{Y23a}). 
\end{example} 

Very recently, the study of stochastic Yang-Mills equation has caught much attention; in coordinates, this equation reads 
\begin{equation}\label{Yang-Mills}
\partial_{t} A_{i} = \Delta A_{i} + \xi_{i} + \left[ A_{j}, 2 \partial_{j} A_{i} - \partial_{i} A_{j} + [A_{j}, A_{i} ] \right], 
\end{equation} 
where $\xi$ is a STWN, $[A, B] = AB- BA$ is a Lie bracket, and the solution is Lie-algebra-valued. For a precise description with more details, we refer to \cite[Equation (1.7)]{CCHS22a}. Following the approach of stochastic quantization that was pioneered by Parisi and Wu \cite{PW81}, there is a growing interest in constructing an invariant measure for \eqref{Yang-Mills}. Via the theory of regularity structures, Chandra, Chevyrev, Hairer, and Shen made significant progress on the analysis of \eqref{Yang-Mills} in both 2D and 3D cases, respectively in \cite{CCHS22a, CCHS22b}. We observe that the nonlinear terms of \eqref{Yang-Mills} consist informally of cubic and Burgers' types, specifically $[A_{j}, [A_{j}, A_{i} ]]$ and $[A_{j}, 2 \partial_{j} A_{i} - \partial_{i} A_{j} ]$, respectively. This makes it clear that our discussion concerning the local subcriticality in Example \ref{Example 2.1} essentially applies, making the case $d= 4$ locally critical and considerably more difficult. 

\subsection{Review on the technique of convex integration: deterministic case} 
The purpose of this section is to review recent developments on the technique of convex integration. To make our statement precise, we distinguish between two types of solutions to the 3D deterministic Navier-Stokes equations on a torus; it can be generalized in the presence of random noise or in case $x \in \mathbb{R}^{d}$ in a standard way.

\begin{define}\label{Definition 2.1}
\rm{(\hspace{1sp}\cite[Definitions 3.5 and 3.6]{BV19b})} 
\begin{enumerate}[label=(\roman*)]
\item A vector field $u \in C_{\text{weak}} ([0,\infty); L^{2} (\mathbb{T}^{3})) \cap L^{2} ( 0,\infty; \dot{H}^{1}(\mathbb{T}^{3}))$ is a Leray-Hopf weak solution to the deterministic Navier-Stokes equations if for any $t$, $u(\cdot, t)$ is weakly divergence-free, has zero mean, satisfies the Navier-Stokes equations distributionally, and satisfies the energy inequality 
\begin{equation*}
\frac{1}{2} \lVert u(t) \rVert_{L^{2}}^{2} + \int_{0}^{t} \lVert \nabla u (s) \rVert_{L^{2}}^{2} ds \leq \frac{1}{2} \lVert u(0) \rVert_{L^{2}}^{2}. 
\end{equation*} 
\item A vector field $u \in C([0, \infty); L^{2}(\mathbb{T}^{3}))$ is a weak solution to the deterministic Navier-Stokes equations if for any $t$, $u(\cdot, t)$ is weakly divergence-free, has zero mean, and satisfies the Navier-Stokes equations distributionally. 
\end{enumerate} 
\end{define} 
 
The origin of the convex integration technique is rooted in the work of Nash \cite{N54} concerning the $C^{1}$ isometric embedding theorem in geometry. This isometric embedding theorem of Nash was considered by Gromov to be one of the primary examples of homotopy-principle, who coined the name ``convex integration'' in \cite[Part 2.4]{G86}. The major breakthrough of its application to the deterministic Euler equations, \eqref{Navier-Stokes} without diffusion $\Delta u$ and zero noise, is credited to De Lellis and Sz$\acute{\mathrm{e}}$kelyhidi Jr. \cite{DS09} who used differential inclusion approach and Baire category argument to construct a solution in $L^{\infty} (0, \infty; L^{\infty} (\mathbb{R}^{d}))$ for all $d \geq 2$, with compact support in space-time, demonstrating non-uniqueness in comparison to a zero solution. Further refinements of the convex integration technique led Isett \cite{I18} to solve the Onsager's conjecture concerning the $C^{\frac{1}{3}}$-threshold of the deterministic Euler equations that conserves energy. Additionally, Buckmaster and Vicol \cite{BV19a} proved the non-uniqueness of weak solutions to the 3D deterministic Navier-Stokes equations by constructing a weak solution with prescribed energy; such results were extended up to the $L^{2}(\mathbb{T}^{3})$-supercritical threshold of $(-\Delta)^{m}$ for $m < \frac{5}{4}$ by Buckmaster, Colombo, and Vicol \cite{BCV22} and Luo and Titi \cite{LT20}. 
 
We briefly explain the key ideas of the convex integration technique in the case of \cite{BV19a}. We fix an arbitrary function $e: \mathbb{R}_{\geq 0} \mapsto \mathbb{R}_{ \geq 0}$ with minimum restrictions. Then, for all $q \in \mathbb{N}_{0}$, we aim to construct solutions to the following Navier-Stokes-Reynolds system:
 \begin{equation}\label{Navier-Stokes Reynolds}
\partial_{t} u_{q} + \divergence (u_{q} \otimes u_{q}) + \nabla \pi_{q} = \Delta u_{q} + \divergence \mathring{R}_{q}, \hspace{3mm} \nabla\cdot u_{q} = 0, 
\end{equation} 
that satisfies several inductive hypothesis, two of them being 
\begin{equation}\label{inductive hypothesis}
\lVert \mathring{R}_{q} \rVert_{C_{t} L_{x}^{1}} \leq \delta_{q+1}, \hspace{3mm} \left\lvert e(t) - \lVert u_{q}(t) \rVert_{L^{2}}^{2} \right\rvert \leq \delta_{q+1} 
\end{equation} 
where $\{\delta_{q}\}_{q\in\mathbb{N}_{0}}$ is a strictly decreasing sequence in $\mathbb{R}_{+}$ such that $\lim_{q\to\infty} \delta_{q} = 0$. Such hypothesis assures that the limiting solution solves the deterministic Navier-Stokes equations with the prescribed energy. One verifies the existence of $(u_{0}, \pi_{0}, \mathring{R}_{0})$ that satisfies \eqref{Navier-Stokes Reynolds} and the inductive hypothesis, assumes the existence of such $(u_{q}, \pi_{q}, \mathring{R}_{q})$, and aims to construct $(u_{q+1}, \pi_{q+1}, \mathring{R}_{q+1})$ accordingly. In order to do so, based on the nonlinear term $\divergence (u_{q} \otimes u_{q})$, one would construct the perturbation 
\begin{equation}\label{Define perturbation}
w_{q+1} \triangleq u_{q+1} - u_{q}, 
\end{equation} 
and solve for 
\begin{align}\label{Define Rq+1}
\mathring{R}_{q+1} =& \text{div}^{-1} \Bigg( \divergence \mathring{R}_{q} + \partial_{t} w_{q+1} \nonumber \\
&\hspace{4mm} + \divergence \left( u_{q} \otimes w_{q+1} + w_{q+1} \otimes u_{q} + w_{q+1} \otimes w_{q+1}  \right) + \nabla (\pi_{q+1} - \pi_{q}) - \Delta w_{q+1}  \Bigg). 
\end{align} 
The heart of the matter is to construct $w_{q+1}$ such that $w_{q+1} \otimes w_{q+1}$ cancels out the previous error $\mathring{R}_{q}$ in \eqref{Define Rq+1} and allow the new error $\mathring{R}_{q+1}$ to be smaller than $\mathring{R}_{q}$, as required in \eqref{inductive hypothesis}. 

\begin{remark}
As we pointed out in Remark \ref{Remark 2.2}, all classical approaches to construct solutions start from approximating the given PDEs, e.g. Galerkin approximation, mollifier, or Fourier truncation. So does the convex integration technique, as can be seen in \eqref{Navier-Stokes Reynolds}. The crucial difference is that in the classical approaches, all the solutions to the approximations of the original PDEs satisfy the energy inequality at every step and hence so do their limits. The approximate solution $u_{q}$ to \eqref{Navier-Stokes Reynolds} does not satisfy any energy inequality; in fact, its energy is \emph{almost} the prescribed arbitrary energy profile $e(t)$, modulus a small vanishing error according to \eqref{inductive hypothesis}. 
\end{remark} 
We refer to \cite{BV19b} for a survey of further developments on the technique of convex integration in the deterministic setting. 

\subsection{Review on the technique of convex integration: stochastic case}\label{Section 2.3}
In the stochastic setting, the technique of convex integration was initially applied on certain compressible Euler equations in \cite{BFH20} by Breit, Feireisl, and Hofmanov$\acute{\mathrm{a}}$ and \cite{CFF19} by Chiodaroli, Feireisl, and Flandoli in two and three dimensions to prove non-uniqueness path-wise. At this point, the authors of \cite{BFH20, CFF19} already observed that such a solution constructed via convex integration is probabilistically strong, which informally requires that the solution is adapted to the filtration generated by the \emph{a priori} fixed noise. We emphasize that the random noise considered in \cite{BFH20, CFF19} were white-in-time and all other works described in this Section \ref{Section 2.3} had random noise that was white only in time. 

Subsequently, Hofmanov$\acute{\mathrm{a}}$, Zhu, and Zhu \cite{HZZ19} applied the convex integration scheme that is similar to \cite[Section 7]{BV19b} to prove non-uniqueness in law of the 3D Navier-Stokes equations forced by additive, linear multiplicative, or nonlinear noise (see also \cite{Y22a} on its generalization akin to \cite{BCV22}). The importance of \cite{HZZ19} is three-fold, at least. First, it was the first construction of a probabilistically strong solution to the 3D Navier-Stokes equations forced by random noise, which was an open problem for a long time (e.g. \cite{F08}). Second, the classical Yamada-Watanabe theorem states that path-wise uniqueness implies uniqueness in law, indicating that the non-uniqueness obtained in \cite{HZZ19} is stronger than those in \cite{BFH20, CFF19}, and the addition of diffusion in the Navier-Stokes equations certainly makes the proof of non-uniqueness more difficult. Third, Cherney's theorem \cite[ Theorem 3.2]{C03} implies that the existence of a probabilistically strong solution and uniqueness in law together imply path-wise uniqueness. Consequently, the construction of a probabilistically strong solution that is non-unique in law by \cite{HZZ19} completely wiped out the possibility of proving path-wise uniqueness of the solution to the 3D Navier-Stokes equations forced by random noise via an application of Cherney's theorem. 

\begin{remark}
We highlight one particular research direction on extension of \cite{HZZ19}. Even in the deterministic setting, the uniqueness of a Leray-Hopf weak solution to the 3D Navier-Stokes equations remains an outstanding open problem (recall Definition \ref{Definition 2.1}). Inspired by \cite{ABC22} due to  Albritton, Bru$\acute{e}$, and Colombo, which demonstrated the non-uniqueness of the Leray-Hopf weak solution to the 3D deterministic Navier-Stokes equations with non-zero force, \cite{BJLZ24, HZZ23a} proved its extension in the presence of random noise. Intuitively, a favorable choice of force may help one prove non-uniqueness; however, it is not clear if the random noise in a classical form such as the typical cylindrical Wiener process in \cite[Theorem 1.1]{HZZ19} or STWN $\xi$ in \eqref{Navier-Stokes} can help induce non-uniqueness of the solution. 

On a related note, it has been well documented that certain types of random noise possess regularizing effects. For example, Kim \cite{K11} showed that certain multiplicative noise can have damping effects inducing global well-posedness of non-diffusive PDEs starting from all sufficiently small initial data with probability arbitrarily close to one. Moreover, transport noise allowed Flandoli, Gubinelli, and Priola \cite{FGP10} to break the threshold of regularity criterion for continuity equation, set previously by Di Perna and Lions \cite{DL89} in the deterministic setting. At the time of writing this manuscript, the convex integration technique seems to induce non-uniqueness even in the presence of such random noise with regularizing property (see e.g. \cite{HLP22, KY25} on the convex integration technique applied on SPDEs with transport noise). 
\end{remark} 

There have been many more extensions and improvements concerning such area of convex integration on SPDEs with white-in-time noise; we refer to \cite{Y25e} for further discussions and details. 

\section{Convex integration technique applied to singular SPDEs}
The purpose of this section is to describe two remarkable works of convex integration technique applied to singular SPDEs: \cite{HZZ21b} on the 3D Navier-Stokes equations (\hspace{1sp}\cite{LZ23, LZ25} on the 2D case); \cite{HZZ22a, HLZZ23} on the 2D surface quasi-geostrophic (SQG) equation. 

\begin{example}\label{Example on singular NS}
Hofmanov$\acute{\mathrm{a}}$, Zhu, and Zhu \cite[Theorem 1.1 and Corollary 1.2]{HZZ21b} were the first to combine the technique of convex integration with the theory of paracontrolled distributions to construct infinitely many analytically weak solutions from given initial data and consequently prove non-uniqueness in law of the 3D Navier-Stokes equations forced by STWN, precisely \eqref{Navier-Stokes}. The main idea can be readily explained from our computations in Example \ref{Example 2.1}. In fact, Hofmanov$\acute{\mathrm{a}}$, Zhu, and Zhu stopped at step (ii) in Example \ref{Example 2.1} because $u_{2}$ is already a function and defined a solution as 
\begin{equation}\label{solution}
u  = z^{ 
\scalebox{0.18}{\begin{tikzpicture}
\draw[black, thick] (0.5,1) -- (0.5,0);
\filldraw[black] (0.5,1) circle (6pt); 
\end{tikzpicture}
}} + z^{ 
\scalebox{0.18}{\begin{tikzpicture}
\draw[black, thick] (-0.7,0.9) -- (0,0);
\draw[black, thick] (0.7,0.9) -- (0,0);
\draw[snake=zigzag] (0,0) -- (0,-1);
\filldraw[black] (-0.7,0.9) circle (6pt); 
\filldraw[black] (0.7,0.9) circle (6pt); 
\end{tikzpicture}
}} + u_{2}.
\end{equation} 
They split $u_{2} = v^{1} + v^{2}$ appropriately so that 
\begin{align*}
\partial_{t} v^{1} + \mathbb{P}_{L} \divergence V^{1} = \Delta v^{1},  \hspace{5mm} \partial_{t} v^{2} + \mathbb{P}_{L} \divergence V^{2} = \Delta v^{2}, 
\end{align*}
where the general rule is 
\begin{enumerate}[label=(\alph*)]
\item to take irregular terms and ill-defined products to $V^{1}$ and regular and well-defined products to $V^{2}$, 
\item to make $V^{1}$ linear in $v^{1}$, and consequently $V^{2}$ nonlinear in $v^{1}$.  
\end{enumerate} 
There is an additional step of considering an appropriate paracontrolled ansatz; we skip this for brevity and refer the interested reader to \cite{HZZ21b} for details. Similarly to \eqref{Navier-Stokes Reynolds}-\eqref{Define Rq+1}, for all $q \in \mathbb{N}_{0}$, they considered 
\begin{subequations}
\begin{align}
& \partial_{t} v_{q}^{1} + \mathbb{P}_{L} \divergence V_{q}^{1} = \Delta v_{q}^{1},  \label{vq1}\\
& \partial_{t} v_{q}^{2} + \mathbb{P}_{L} \divergence V_{q}^{2} = \Delta v_{q}^{2} + \divergence \mathring{R}_{q}, \label{vq2}
\end{align}
\end{subequations} 
along with appropriate inductive hypothesis (cf. \eqref{inductive hypothesis}). Then they 
\begin{enumerate}
\item started the iteration from $v_{0}^{2} \equiv 0$, solved for $\mathring{R}_{0}$ similarly to \eqref{Define Rq+1}, obtained the unique $v_{0}^{1}$ by a  fixed point theorem taking advantage of the linearity of \eqref{vq1}, 
\item assumed the existence of unique $(v_{q}^{1}, v_{q}^{2}, \mathring{R}_{q})$ that satisfies the inductive hypothesis, 
\item constructed $w_{q+1}^{2} \triangleq v_{q+1}^{2} - v_{q}^{2}$ (cf. \eqref{Define perturbation}), 
\item plugged $v_{q+1}^{2} = v_{q}^{2} + w_{q+1}^{2}$ to \eqref{vq1}, and deduced the unique $v_{q+1}^{1}$ by a fixed point theorem taking advantage of linearity of \eqref{vq1} again.  
\end{enumerate} 
Before proceeding to step (5), we pause here to make a comment. 
\begin{remark}
Going back to \cite[p. 5]{GIP15}, the theory of paracontrolled distributions constructs an analytically weak solution. While $(v_{q}^{1}, v_{q}^{2}, \mathring{R}_{q})$ is smooth at every step $q\in \mathbb{N}_{0}$, because \eqref{vq1} consists of rough terms, one may question if the limit $v^{1}$ will have sufficient regularity to satisfy a weak formulation. In fact, one can recall from Example \ref{Example 2.1} that the most singular term in \eqref{Define u2} is 
$\divergence (z^{ 
\scalebox{0.18}{\begin{tikzpicture}
\draw[black, thick] (-0.7,0.9) -- (0,0);
\draw[black, thick] (0.7,0.9) -- (0,0);
\draw[snake=zigzag] (0,0) -- (0,-1);
\filldraw[black] (-0.7,0.9) circle (6pt); 
\filldraw[black] (0.7,0.9) circle (6pt); 
\end{tikzpicture}
}} \otimes z^{ 
\scalebox{0.18}{\begin{tikzpicture}
\draw[black, thick] (0.5,1) -- (0.5,0);
\filldraw[black] (0.5,1) circle (6pt); 
\end{tikzpicture}
}})$ of regularity $\mathscr{C}^{\beta}_{\mathfrak{s}}$ for $\beta < 3 - \frac{3d}{2} = - \frac{3}{2}$ so that we can expect at best $v^{1} \in \mathscr{C}^{\beta}_{\mathfrak{s}}$ for $\beta < \frac{1}{2}$ $\mathbb{P}$-a.s. (cf. \cite[Remark 5.3]{HZZ21b}), which is more than enough for an analytically weak solution because the Navier-Stokes type nonlinearity requires only $L^{2}(0, T; L^{2}(\mathbb{T}^{3}))$ for the purpose of distributional formulation. We also note that although the solution in \eqref{solution} is constructed to be analytically weak, $z^{ 
\scalebox{0.18}{\begin{tikzpicture}
\draw[black, thick] (0.5,1) -- (0.5,0);
\filldraw[black] (0.5,1) circle (6pt); 
\end{tikzpicture}
}}$
and $z^{ 
\scalebox{0.18}{\begin{tikzpicture}
\draw[black, thick] (-0.7,0.9) -- (0,0);
\draw[black, thick] (0.7,0.9) -- (0,0);
\draw[snake=zigzag] (0,0) -- (0,-1);
\filldraw[black] (-0.7,0.9) circle (6pt); 
\filldraw[black] (0.7,0.9) circle (6pt); 
\end{tikzpicture}
}}$ are solved in a mild formulation: \eqref{Explicit solution} and 
\begin{align*}
z^{ 
\scalebox{0.18}{\begin{tikzpicture}
\draw[black, thick] (-0.7,0.9) -- (0,0);
\draw[black, thick] (0.7,0.9) -- (0,0);
\draw[snake=zigzag] (0,0) -- (0,-1);
\filldraw[black] (-0.7,0.9) circle (6pt); 
\filldraw[black] (0.7,0.9) circle (6pt); 
\end{tikzpicture}
}} (t) = - \int_{0}^{t} e^{(t-s) \Delta} \mathbb{P}_{L} \divergence ( z^{ 
\scalebox{0.18}{\begin{tikzpicture}
\draw[black, thick] (0.5,1) -- (0.5,0);
\filldraw[black] (0.5,1) circle (6pt); 
\end{tikzpicture}
} \otimes 2} ) (s) ds. 
\end{align*}
\end{remark} 
We now resume the iteration procedure. After constructing the unique $v_{q+1}^{1}$ in step (4), Hofmanov$\acute{\mathrm{a}}$, Zhu, and Zhu
\begin{enumerate}[resume]
\item considered \eqref{vq2}, solved for $\mathring{R}_{q+1}$ similarly to \eqref{Define Rq+1},
\item and verified that the inductive hypothesis are satisfied to conclude the necessary steps at level $q+1$. 
\end{enumerate} 
Besides the elegant mathematics combining the theory of paracontrolled distributions and the technique of convex integration, the result of \cite{HZZ21b} is of significance because the solution constructed therein is global-in-time, while, as we emphasized, the theory of regularity structures or the theory of paracontrolled distributions alone led only to local solution theory in \cite{ZZ15}. While the 2D Navier-Stokes equations forced by STWN admits global-in-time solution theory due to the explicit knowledge of invariant measure \cite{DD02} or because the dimension is sufficiently low to make the diffusion energy critical \cite{HR24}, it was not known if one can extend \cite{ZZ15} to construct a global-in-time solution in the 3D case. \cite{HZZ21b} achieved this by the iteration procedure (1)-(6) and the argument from \cite{HZZ21a} that allows the solution to exist up to a  stopping time and repeat, which is possible because the stopping time depends only on the fixed noise, not the terminal data of the solution.
\end{example} 

\begin{remark}\label{Remark 3.3}
One of the key takeaways from Example \ref{Example on singular NS} can be applied to the solution theory of PDEs in general, not just stochastic. For example, Wu and the second author in \cite{WY24} studied the following deterministic hyperbolic Navier-Stokes equations: 
\begin{equation}\label{hyperbolic} 
\partial_{tt} u +\partial_{t} u + \divergence (u\otimes u) + \nabla \pi + (-\Delta)^{\frac{\gamma}{2}} u =0, \hspace{3mm} \nabla\cdot u = 0.
\end{equation} 
The origin of this PDE goes back three quarters of a century to \cite{C49, C58} by Cattaneo and \cite{V58} by Vernotte. For example, the second temporal derivative $\partial_{tt}u$ can be derived by considering delayed stress. Importantly for our discussion, although a local-in-time solution for \eqref{hyperbolic} can be constructed (e.g. \cite{JLTW23}), the global existence of even a weak solution for \eqref{hyperbolic} in any dimension, even two, was open because the solution $u$ does not satisfy the energy inequality in sharp contrast to the Navier-Stokes equations due to the problematic terms of $\partial_{tt}u \cdot u$ and $(u\cdot\nabla) u \cdot \partial_{t} u$ upon $L^{2}_{x}$-estimate of $u$ and $\partial_{t} u$, respectively. The absence of any bounded quantities makes the classical approach such as Galerkin approximation inapplicable. Therefore, this problem of constructing a global-in-time weak solution to \eqref{hyperbolic}  shares similarities with singular SPDEs (recall Example \ref{Example 2.1}). 

Fortunately, notwithstanding the sharp differences in the linear term $\partial_{tt} u$, the nonlinear term of the Navier-Stokes equations and \eqref{hyperbolic} are identical, suggesting a potential application of the convex integration technique to construct a global-in-time solution. In \cite{WY24}, Wu and the second author succeeded in doing so in the 2D case as long as $\gamma \in (0, \frac{4}{3})$; because the energy was prescribed, the solution therein was also non-unique (we refer to \cite[Theorem 2.1]{WY24} for details). This is an instance in which the technique of convex integration was utilized not with the goal of proving non-uniqueness of a weak solution that is already known to exist (e.g. \cite{BV19a}); instead, it was applied to a PDE for which no known method (Galerkin approximation, mollifier, or Fourier truncation) can construct a global-in-time weak solution, similarly to Example \ref{Example on singular NS}. 
\end{remark} 

\begin{example}\label{Example on singular SQG} 
Hofmanov$\acute{\mathrm{a}}$, Zhu, and Zhu \cite{HZZ22a} were the first to apply the convex integration technique to construct solution to locally critical/supercritical singular SPDEs, namely the following singular SQG equation: for $\alpha \in [0, 1), \gamma \in [0, \frac{3}{2})$, and $x \in \mathbb{T}^{2}$, 
\begin{equation}\label{SQG}
\partial_{t} \theta + \mathcal{R}^{\bot} \theta \cdot \nabla  \theta + (-\Delta)^{\frac{\gamma}{2}} \theta = (-\Delta)^{\frac{\alpha}{2}} \xi, \hspace{3mm} \mathcal{R}^{\bot} \theta \triangleq \nabla^{\bot} ( -\Delta)^{-\frac{1}{2}} \theta,
\end{equation} 
where $\mathcal{R}$ represents the Riesz transform vector, and $\xi$ was white-in-space but not allowed to be white in time due to a technical restriction. The main result \cite[Theorems 1.2 and 3.2]{HZZ22a} is the construction of infinitely many analytically weak solutions from a given initial data $\theta^{\text{in}}$, which again leads to non-uniqueness in law; specifically, the solution $\theta$ satisfies 
\begin{align}\label{analytically weak}
&\langle \theta(t), \psi \rangle_{\dot{H}^{-\frac{1}{2}} \times \dot{H}^{\frac{1}{2}}} + \int_{0}^{t} \frac{1}{2} \langle \Lambda^{-\frac{1}{2}} \theta, \Lambda^{\frac{1}{2}} [ \mathcal{R}^{\bot} \cdot \nabla, \nabla \psi] \theta \rangle_{\dot{H}^{-\frac{1}{2}} \times \dot{H}^{\frac{1}{2}}} ds  \\
&+ \int_{0}^{t} \langle (-\Delta)^{\frac{\gamma}{2}} \theta, \psi \rangle_{\dot{H}^{-\frac{1}{2}} \times \dot{H}^{\frac{1}{2}}} ds = \langle \theta^{\text{in}}, \psi \rangle_{\dot{H}^{-\frac{1}{2}} \times \dot{H}^{\frac{1}{2}}} + \int_{0}^{t} \langle (-\Delta)^{\frac{\alpha}{2}} \xi, \psi \rangle_{\dot{H}^{-\frac{1}{2}} \times \dot{H}^{\frac{1}{2}}} ds \nonumber
\end{align} 
for all $\psi \in C^{\infty} (\mathbb{T}^{2})$. The nonlinear term in \eqref{analytically weak} is finite due to the Calder$\acute{o}$n commutator lemma or its variant (e.g. \cite[Proposition B.1]{CKL21}) as long as $\theta \in L^{2} (0, \infty; \dot{H}^{-\frac{1}{2}} (\mathbb{T}^{2}))$.  

There are three novelties in this work, all of which are related but still interesting individually. 
\begin{enumerate}[label=(\roman*)]
\item The result covers the locally critical/supercritical cases. 
\item The proof consisted of no renormalization constants. 
\item The solution constructed therein possessed the regularity of $L^{2}(0, \infty; \dot{H}^{-\frac{1}{2}}(\mathbb{T}^{2})) \cap C([0, \infty); B_{\infty, 1}^{-\frac{1}{2} - \delta} (\mathbb{T}^{2}))$ for any $\delta > 0$. 
\end{enumerate} 
\noindent Let us elaborate on (i)-(iii) separately. 

First, because the noise is additive and white only in space, it is more natural to deduce the requirement for local subcriticality via rescaling rather than an analogous computation to \eqref{Regularity of STWN}-\eqref{deduction 1}. If $\tilde{\xi}(x) = \lambda \xi(\lambda x)$, then the rescaled solution of \eqref{SQG} is $\tilde{\theta}(t,x) = \lambda^{1+ \alpha - \gamma} \theta(\lambda^{\gamma} t, \lambda x)$ so that they satisfy 
\begin{align*}
\partial_{t} \tilde{\theta} + \lambda^{2\gamma - 2 - \alpha} \mathcal{R}^{\bot} \tilde{\theta} \cdot \nabla \tilde{\theta} + (-\Delta)^{\frac{\gamma}{2}} \tilde{\theta} =  (-\Delta)^{\frac{\alpha}{2}} \tilde{\xi}
\end{align*}
indicating that local subcriticality requires $2 \gamma - 2 - \alpha > 0$. Because \cite[Theorem 1.2]{HZZ22a} holds for all $\alpha \in [0, 1)$ and $\gamma \in [0, \frac{3}{2})$, we observe that this threshold for local criticality can  be violated. 

Such a solution theory, even local-in-time, would be surprising considering Example \ref{Example 2.1}, where we observed that one would not be able to keep track of infinitely many trees and ultimately infinitely many renormalization constants in the locally critical case, let alone locally supercritical case. The explanation is actually part (ii). Hofmanov$\acute{\mathrm{a}}$, Zhu, and Zhu did not have to subtract out any linear equations with ill-defined products; instead, they worked directly on \eqref{SQG}. To be precise, they worked on the equation solved by $f \triangleq \Lambda^{-1} \theta$, but importantly, they worked directly on the SPDE. The noise is white only in space and additive; thus, no renormalization was needed, and only the spatial irregularity had to be dealt with. The crucial ingredient was the convex integration scheme, especially the building blocks, of \cite{CKL21} by Cheng, Kwon, and Li on the stationary solutions (time independent) to the 2D deterministic SQG equation. Using such building blocks, any ill-defined products could be avoided; specifically, for any product of two distributions with compact support that are sufficiently far apart, no resonant terms would arise and hence the product would be well-defined (recall Remark \ref{Remark 2.1}). 

Part (iii) is of surprise particularly because the standard deduction from the fact that $\xi \in \mathscr{C}^{\beta}(\mathbb{T}^{2})$ for $\beta < -1$ would show the regularity of the solution to be 
\begin{equation}\label{solution theta}
\theta \in \mathscr{C}^{\beta}(\mathbb{T}^{2}) \text{ for } \beta < - 1 - \alpha + \gamma \hspace{2mm} \mathbb{P}\text{-a.s.}
\end{equation} 
at best. Considering the special case $\gamma = 0$ and $\alpha = 1- \kappa$ for any $\kappa > 0$, it shows that $\theta \in \mathscr{C}^{\beta}(\mathbb{T}^{2})$ for $\beta < -2 + \kappa$ $\mathbb{P}$-a.s. 
In contrast, the solution constructed in \cite{HZZ22a} via convex integration lies in $C([0, \infty); B_{\infty, 1}^{-\frac{1}{2} - \delta} (\mathbb{T}^{2})) \subset C([0, \infty); \mathscr{C}^{-\frac{1}{2} - \delta} (\mathbb{T}^{2}))$ for any $\delta > 0$. 

In relevance, let us also return to the comment made in Remark \ref{Remark 2.2}. If the solution constructed by the convex integration technique satisfies energy inequality, considering the worst case $\alpha = 1- \kappa$ for some $\kappa > 0$ and $\gamma = 0$ in \eqref{analytically weak}, one may consider  
\begin{align*}
\int_{0}^{t} \int_{\mathbb{T}^{2}} (-\Delta)^{\frac{\alpha}{2}} \xi (-\Delta)^{-\frac{1}{2}} \theta dxds  \lesssim \int_{0}^{t}  \lVert \xi \rVert_{\dot{H}^{\frac{1}{2} - \kappa}} \lVert \theta \rVert_{\dot{H}^{-\frac{1}{2}}} ds 
\end{align*} 
and feel that $\xi$ must lie in $\dot{H}^{\frac{1}{2} - \kappa}(\mathbb{T}^{2})$ $\mathbb{P}$-a.s. Yet, $\xi$ only has the regularity of $\mathscr{C}^{\beta}(\mathbb{T}^{2})$ for $\beta < -1$ $\mathbb{P}$-a.s. As pointed out already in Remark \ref{Remark 2.2}, the misconception here is that the solution constructed by the convex integration technique does not necessarily satisfy the energy inequality.  

The limitation of \cite{HZZ22a} was that the noise could not be rough in time, disallowing the case of the STWN. Independently, the second author in  \cite{Y25b, Y25c}, and together with Walker in \cite{WY24a}, extended the convex integration scheme of \cite{BSV19} by Buckmaster, Shkoller, and Vicol on the deterministic SQG equation to the case of the SQG equation forced by white-in-time noise $\xi$. More recently, Hofmanov$\acute{\mathrm{a}}$, Luo, Zhu, and Zhu \cite{HLZZ23} extended \cite{HZZ22a} to the case $\xi$ is a STWN. A novel new idea therein is to, while working on the equation of $f = \Lambda^{-1} \theta$, define $z^{ 
\scalebox{0.18}{\begin{tikzpicture}
\draw[black, thick] (0.5,1) -- (0.5,0);
\filldraw[black] (0.5,1) circle (6pt); 
\end{tikzpicture}
}}$ to satisfy 
\begin{equation}\label{modified}
\partial_{t} z^{ 
\scalebox{0.18}{\begin{tikzpicture}
\draw[black, thick] (0.5,1) -- (0.5,0);
\filldraw[black] (0.5,1) circle (6pt); 
\end{tikzpicture}
}} + ( - \Delta)^{\frac{\gamma_{1} -1}{2}}z^{ 
\scalebox{0.18}{\begin{tikzpicture}
\draw[black, thick] (0.5,1) -- (0.5,0);
\filldraw[black] (0.5,1) circle (6pt); 
\end{tikzpicture}
}} = (-\Delta)^{\frac{\alpha -1}{2}} \xi
\end{equation} 
for $\gamma_{1} > \gamma$ instead of the conventional 
\begin{equation}\label{conventional}
\partial_{t} z^{ 
\scalebox{0.18}{\begin{tikzpicture}
\draw[black, thick] (0.5,1) -- (0.5,0);
\filldraw[black] (0.5,1) circle (6pt); 
\end{tikzpicture}
}} + ( - \Delta)^{\frac{\gamma -1}{2}} z^{ 
\scalebox{0.18}{\begin{tikzpicture}
\draw[black, thick] (0.5,1) -- (0.5,0);
\filldraw[black] (0.5,1) circle (6pt); 
\end{tikzpicture}
}} = (-\Delta)^{\frac{\alpha -1}{2}} \xi
\end{equation} 
according to \eqref{Define z1}. Informally, employing \eqref{modified} instead of \eqref{conventional} tames the temporal irregularity arising from the STWN $\xi$. On the other hand, as expected from our discussions in Example \ref{Example 2.1}, due to the introduction of $z^{ 
\scalebox{0.18}{\begin{tikzpicture}
\draw[black, thick] (0.5,1) -- (0.5,0);
\filldraw[black] (0.5,1) circle (6pt); 
\end{tikzpicture}
}}$, the authors of \cite{HLZZ23} had to work with a renormalization constant; we refer to \cite{HZZ22a, HLZZ23} for further details.
\end{example} 

\section{Heat equation with damping of odd power}
The purpose of this section is to give a different perspective concerning applications of the convex integration technique to singular SPDEs. As we described in Section \ref{Section 2.1}, it is desired that the convex integration technique can be applied to other equations of mathematical physics, not just those of fluid mechanics; potential examples include the KPZ equation \eqref{Define KPZ}, the $\Phi^{4}$ model \eqref{Define Phi4}, and the Yang-Mills equation \eqref{Yang-Mills}. We focus on the $\Phi^{4}$ model \eqref{Define Phi4}, and explain our reasonings as follows. Every successful application of the convex integration technique on SPDE thus far was preceded by the deterministic case. Hence, we zoom in on 
\begin{equation}\label{cubic heat}
\partial_{t} u + u^{3} = \Delta u, \hspace{3mm} x \in \mathbb{T}^{d},  
\end{equation} 
which is a heat equation with cubic damping. While this suggests that non-uniqueness is highly unlikely, we recall the interesting example of Tychonov's solution to heat equation (see \cite{S23, Y16} and \cite[Section 7]{J82}): for $x \in \mathbb{R}$ and $\alpha > 1$,  
\begin{equation}\label{Tychonov}
u(t,x) = \sum_{k=0}^{\infty} \frac{ \partial_{t}^{k} g(t)}{(2k)!} \lvert x \rvert^{2k} \hspace{1mm} \text{ where } \hspace{1mm}  g(t) \triangleq 
\begin{cases}
e^{- \frac{1}{t^{\alpha}}} & \text{ for } t > 0, \\
0 & \text{ for } t \leq 0, 
\end{cases}
\end{equation} 
that satisfies 
\begin{align*}
\partial_{t} u = \Delta u, \hspace{3mm} u\rvert_{t=0} \equiv 0. 
\end{align*}
The solution $u$ from \eqref{Tychonov} verifies non-uniqueness in comparison to a zero solution, in fact, infinitely many solutions by considering different values of $\alpha > 1$. 
\begin{remark}\label{Remark on the sign}
We also recall that when the cubic nonlinearity does not have damping effect, i.e. when $u^{3}$ is on the right-hand side of \eqref{cubic heat}, there is abundance of non-uniqueness results in the literature; e.g. \cite[Theorem 3]{NS85} by Ni and Sacks and \cite[Corollary 0.9]{T02} by Terraneo in $C([0, T]; L^{3}(D))$ with $D$ being a unit ball and $\mathbb{R}^{d}$, respectively. This is in sharp contrast to the Navier-Stokes equations, or almost every other PDEs in fluid mechanics, where the sign of the nonlinear term can be readily intertwined by replacing the solution $u$ by $-u$. More precisely, if $u$ satisfies 
\begin{equation*}
\partial_{t} u  + \nabla \pi =  \divergence (u\otimes u) +  \Delta u, \hspace{3mm} \nabla\cdot u = 0, 
\end{equation*} 
then $-u$ solves the deterministic Navier-Stokes equations, \eqref{Navier-Stokes} with zero noise. In contrast, even if $u$ solves \eqref{cubic heat}, $-u$ would not satisfy 
\begin{align*}
\partial_{t} u = u^{3} + \Delta u.
\end{align*}
\end{remark}

Now, for generality we consider
\begin{equation}\label{odd power}
\partial_{t} u + u^{n} = \Delta u, \hspace{3mm} u \rvert_{t=0} = u^{\text{in}} \in L^{n} (\mathbb{T}^{d}), 
\end{equation}
where $n \in \mathbb{N}$ is any odd number such that $n \geq 3$. We state its definition precisely following Definition \ref{Definition 2.1} but with minimum regularity needed for its weak formulation: 
\begin{define}\label{Definition 4.1}
Given initial data $u^{\text{in}} \in L^{n} (\mathbb{T}^{d})$, a scalar field $u \in L^{n} (0, \infty; L^{n} (\mathbb{T}^{d}))$ is a weak solution to \eqref{odd power} if 
\begin{align}
&\int_{\mathbb{T}^{d}} u(t,x) \psi(t,x) dx - \int_{\mathbb{T}^{d}} u^{\text{in}} (x) \psi(0,x) dx - \int_{0}^{t} \int_{\mathbb{T}^{d}} u(s,x) \partial_{s} \psi(s,x) dx ds  \nonumber \\
& \hspace{30mm} + \int_{0}^{t} \int_{\mathbb{T}^{d}} u^{n}(s,x) \psi(s,x) dx ds = \int_{0}^{t} \int_{\mathbb{T}^{d}} u(s,x) \Delta \psi(s,x) dx ds \label{weak formulation}
\end{align} 
for all test function $\psi$ in $C_{c}^{\infty} ( (-\infty, T) \times \mathbb{T}^{d})$. 
\end{define} 

Our point of view is the following. If the technique of convex integration is ever applied successfully on \eqref{odd power}, then its weak solution will possess the regularity of $u \in L^{n} (0, \infty; L^{n} (\mathbb{T}^{d}))$ at minimum so that the nonlinear term becomes well-defined. To the best of our knowledge, every convex integration solution resulted in non-uniqueness, which is natural considering its origin in relation to homotopy principle. Therefore, by proving the uniqueness of such a weak solution with minimum regularity of $L^{n} (0, \infty; L^{n} (\mathbb{T}^{d}))$ in the following Theorem \ref{Theorem 4.1}, of which we are not aware in the literature to the best of our knowledge, we can rule out the possibility of any solution to \eqref{odd power} constructed via convex integration, at least within the scope of implementations of the existing method.

\begin{theorem}\label{Theorem 4.1} 
Let $n \in \mathbb{N}$ be any odd number such that $n \geq 3$. Suppose that $u \in L^{n} (0,\infty; L^{n}(\mathbb{T}^{d}))$ is a weak solution to \eqref{odd power} according to Definition \ref{Definition 4.1}. Then $u$ is unique. 
\end{theorem}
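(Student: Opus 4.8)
The conceptual heart of the argument is the \emph{monotonicity} of the damping term. Suppose $u_{1}$ and $u_{2}$ are two weak solutions in $L^{n}(0,\infty;L^{n}(\mathbb{T}^{d}))$ emanating from the same datum $u^{\text{in}}$, and set $w \triangleq u_{1} - u_{2}$. Subtracting the two copies of \eqref{odd power} shows that $w$ satisfies, in the sense of \eqref{weak formulation} but with vanishing initial data,
\begin{equation*}
\partial_{t} w - \Delta w = -(u_{1}^{n} - u_{2}^{n}).
\end{equation*}
Because $n$ is odd, the map $s \mapsto s^{n}$ is strictly increasing on $\mathbb{R}$, so $u_{1}^{n} - u_{2}^{n}$ and $w = u_{1} - u_{2}$ have the same sign pointwise; equivalently $(u_{1}^{n} - u_{2}^{n}) w \geq 0$ and $(u_{1}^{n} - u_{2}^{n})\,\mathrm{sgn}(w) = \lvert u_{1}^{n} - u_{2}^{n}\rvert \geq 0$. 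Morally, testing the difference equation against $w$ yields $\tfrac{1}{2}\tfrac{d}{dt}\lVert w \rVert_{L^{2}}^{2} + \lVert \nabla w \rVert_{L^{2}}^{2} + \int_{\mathbb{T}^{d}}(u_{1}^{n}-u_{2}^{n})w\,dx = 0$, and discarding the two manifestly nonnegative terms forces $\lVert w(t)\rVert_{L^{2}}^{2}\le \lVert w(0)\rVert_{L^{2}}^{2}=0$. The entire difficulty is that this computation is not licensed at the given regularity.

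The obstruction is that the only control on the nonlinearity is $u_{i}^{n}\in L^{1}(0,T;L^{1})$, so the forcing $u_{1}^{n}-u_{2}^{n}$ sits merely in $L^{1}(L^{1})$; consequently $w$ need not belong to $L^{2}(0,T;\dot{H}^{1})$, and even the pairing $\int (u_{1}^{n}-u_{2}^{n})w\,dx$ need not be finite, so the formal energy identity cannot be invoked verbatim. The plan is therefore to first upgrade the weak formulation to a Duhamel representation and then run a regularized estimate in which the favorable sign lets one \emph{discard} rather than estimate the dangerous term. For the first step I would test \eqref{weak formulation} (written for $w$) against $\psi(s,x) = [e^{(t-s)\Delta}\phi](x)$ with $\phi \in C^{\infty}(\mathbb{T}^{d})$, which solves the backward heat equation $\partial_{s}\psi = -\Delta\psi$; the terms containing $\partial_{s}\psi$ and $\Delta\psi$ cancel, and after removing a time cut-off near $s=t$ one obtains the mild identity
\begin{equation*}
w(t) = -\int_{0}^{t} e^{(t-s)\Delta}\big(u_{1}^{n}(s)-u_{2}^{n}(s)\big)\,ds \quad \text{in } L^{1}(\mathbb{T}^{d}),
\end{equation*}
valid since $e^{\tau\Delta}$ is a contraction on $L^{1}$ and $\int_{0}^{t}\lVert u_{1}^{n}-u_{2}^{n}\rVert_{L^{1}}\,ds<\infty$; this also yields $w\in C([0,\infty);L^{1})$ with $w(0)=0$.

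For the rigorous estimate I would work at the $L^{1}$ level, which is natural because the forcing lives there and the relevant multipliers ($\mathrm{sgn}$ and its smoothings) are bounded. Mollifying with the semigroup, $w_{\epsilon}\triangleq e^{\epsilon\Delta}w$ is spatially smooth and solves $\partial_{t}w_{\epsilon}-\Delta w_{\epsilon} = -e^{\epsilon\Delta}(u_{1}^{n}-u_{2}^{n})$ classically, with $w_{\epsilon}(0)=0$. Multiplying by a smooth nondecreasing approximation $\mathrm{sgn}_{\delta}$ of the signum and integrating, the diffusion contributes $-\int_{\mathbb{T}^{d}} \mathrm{sgn}_{\delta}'(w_{\epsilon})\lvert\nabla w_{\epsilon}\rvert^{2}\,dx\le 0$ (Kato's inequality in disguise), so that after letting $\delta\to0$ and integrating in time
\begin{equation*}
\lVert w_{\epsilon}(t)\rVert_{L^{1}} \le -\int_{0}^{t}\!\!\int_{\mathbb{T}^{d}} e^{\epsilon\Delta}(u_{1}^{n}-u_{2}^{n})\,\mathrm{sgn}(w_{\epsilon})\,dx\,ds.
\end{equation*}
Finally I would send $\epsilon\to0$: the left side tends to $\lVert w(t)\rVert_{L^{1}}$, while on the right $e^{\epsilon\Delta}(u_{1}^{n}-u_{2}^{n})\to u_{1}^{n}-u_{2}^{n}$ in $L^{1}(L^{1})$ and, along a subsequence, $\mathrm{sgn}(w_{\epsilon})\to\mathrm{sgn}(w)$ almost everywhere on $\{w\ne0\}$ (on $\{w=0\}$ the forcing vanishes), so dominated convergence gives the limit $-\int_{0}^{t}\!\int_{\mathbb{T}^{d}} \lvert u_{1}^{n}-u_{2}^{n}\rvert\,dx\,ds\le0$. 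Hence $\lVert w(t)\rVert_{L^{1}}=0$ and $u_{1}=u_{2}$.

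The main obstacle, as flagged above, is precisely the passage to the limit in the two regularizations while \emph{keeping} the sign of the nonlinear term: because $u_{1}^{n}-u_{2}^{n}$ is only integrable, one cannot pair it against a perturbation such as $e^{2\epsilon\Delta}w-w$ in an $L^{2}$ energy scheme without extraneous regularity, which is why I route the argument through the $L^{1}$ contraction, where the test multiplier stays bounded and the monotonicity $(u_{1}^{n}-u_{2}^{n})\,\mathrm{sgn}(w)=\lvert u_{1}^{n}-u_{2}^{n}\rvert\ge0$ survives intact. A secondary technical point is the justification of the chain rule for $t\mapsto\int_{\mathbb{T}^{d}}\beta_{\delta}(w_{\epsilon})\,dx$ with $\beta_{\delta}'=\mathrm{sgn}_{\delta}$, which is routine once $w_{\epsilon}$ is known to be spatially smooth and absolutely continuous in time with values in $L^{1}$.
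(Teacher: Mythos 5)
Your proof is correct, and it reaches the paper's conclusion by a genuinely different technical route. The paper never passes to a mild formulation: it works directly on the weak formulation \eqref{weak formulation}, regularizing in time by Steklov averages (Lemma \ref{Lemma on Steklov}) and in space by convolution, and then testing with the mollified smoothed sign $\left( w_{h,\epsilon}/\sqrt{\delta + w_{h,\epsilon}^{2}}\right)_{\epsilon}$. This forces it to control two commutator-type errors ($I_{1,h,\epsilon}$, from moving the mollifier off the nonlinearity, and $I_{2,h,\epsilon}$, from replacing $w_{h}$ by $w_{h,\epsilon}$) before exploiting the sign of the main term $I_{3,h,\epsilon}$, where the monotonicity of $s \mapsto s^{n}$ enters through an explicit pairing argument giving $\sum_{l=0}^{n-1} u_{1,h}^{n-1-l} u_{2,h}^{l} \geq \frac{1}{2}\left( u_{1,h}^{n-1} + u_{2,h}^{n-1} \right)$. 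Your route instead upgrades \eqref{weak formulation} to a Duhamel identity and regularizes with the heat semigroup itself; since $e^{\epsilon \Delta}$ commutes exactly with the equation, no commutator terms arise, and monotonicity enters only through the qualitative sign identity $(u_{1}^{n} - u_{2}^{n})\,\mathrm{sgn}(w) = \lvert u_{1}^{n} - u_{2}^{n} \rvert$, together with the key observation---which you state correctly and which is the crux of your limit $\epsilon \to 0$---that the forcing vanishes identically on $\{ w = 0\}$. The shared skeleton is the same (smoothed sign multiplier, Kato-type nonpositivity of the diffusion term, $L^{1}$-contraction conclusion), but your version is leaner because it exploits the constant-coefficient semigroup structure on $\mathbb{T}^{d}$ (self-adjointness, $L^{1}$-contraction, exact commutation), whereas the paper's Steklov-plus-mollifier scheme is more robust: it would survive variable coefficients, boundaries, or any setting where a Duhamel representation is unavailable, and it retains the quantitative damping term $\frac{1}{2} \int w^{2} ( u_{1}^{n-1} + u_{2}^{n-1}) / \sqrt{\delta + w^{2}}\, dx$ that your sign argument discards. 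Two points in your write-up require the small repairs you already flagged: the backward-heat test function must be shifted and cut off near $s = t$ to be admissible in Definition \ref{Definition 4.1} (routine), and $w_{\epsilon}$ solves the heat equation only for almost every time, as an absolutely continuous $L^{1}$-valued function rather than classically, which is exactly what your chain-rule step needs and no more.
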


The proof of Theorem \ref{Theorem 4.1} utilizes Steklov average, of which we collect its definition and useful properties in the following Lemma \ref{Lemma on Steklov} for readers' convenience. 
\begin{lemma}\label{Lemma on Steklov}
Let $I \subset \mathbb{R}$ be any interval, $E$ any measurable set over $\mathbb{T}^{d}$, $1 \leq q, r \leq \infty$ and $h > 0$. Given $v \in L^{r} (I;  L^{q}(E))$, we define a Steklov average by 
\begin{align*}
v_{h} (t, \cdot) \triangleq \frac{1}{h} \int_{t-h}^{t} \tilde{v} (s, \cdot) ds \hspace{2mm} \text{ where } \hspace{2mm}  \tilde{v}(t, \cdot) \triangleq 
\begin{cases}
v(t, \cdot) &\text{ if } t \in I, \\
0 & \text{ if } t \in \mathbb{R} \setminus I.
\end{cases}
\end{align*}
Then 
\begin{enumerate}
\item (e.g. \cite[p. 9]{BD25}) $\lVert v_{h} \rVert_{L^{q}(I; L^{q}(E))} \leq \lVert v \rVert_{L^{q}(I; L^{q}(E))}$ and $(\partial_{t} v_{h})(t,x) = \frac{\tilde{v}(t,x) - \tilde{v}(t-h,x)}{h}$. 
\item (e.g. \cite[Lemma 2.4]{CDG17}) $v_{h} \in C(I; L^{q}(E)) \cap L^{\infty} (I; L^{q}(E))$. 
\item (e.g. \cite[Lemma 2.5]{CDG17}) $v_{h} \to v$ in $L^{r} (I; L^{q}(E))$ as $h\searrow 0$. 
\end{enumerate} 
\end{lemma}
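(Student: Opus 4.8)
The plan is to observe that the Steklov average is nothing but convolution in the time variable: writing $\phi_{h} \triangleq \frac{1}{h}\mathbf{1}_{[0,h]}$, one has $v_{h} = \tilde{v} * \phi_{h}$ in $t$, with $\lVert \phi_{h}\rVert_{L^{1}(\mathbb{R})} = 1$. All three assertions follow from this single structural fact together with elementary measure theory, so I would treat $\phi_{h}$ as a (one-sided) approximate identity throughout.

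For the norm bound in (1), I would argue pointwise in $x$. Since $\frac{1}{h}\mathbf{1}_{[t-h,t]}\,ds$ is a probability measure, Jensen's inequality gives $|v_{h}(t,x)|^{q} \leq \frac{1}{h}\int_{t-h}^{t}|\tilde{v}(s,x)|^{q}\,ds$ for $1 \leq q < \infty$ (the case $q = \infty$ being immediate); integrating over $t \in I$ and $x \in E$ and applying Tonelli to swap the time integrals collapses the average and yields $\lVert v_{h}\rVert_{L^{q}(I;L^{q}(E))} \leq \lVert v\rVert_{L^{q}(I;L^{q}(E))}$. For the derivative identity I would introduce the vector-valued antiderivative $V(t) \triangleq \int_{-\infty}^{t}\tilde{v}(s)\,ds$, which is absolutely continuous into $L^{q}(E)$ with $V' = \tilde{v}$ a.e., and write $v_{h}(t) = \frac{1}{h}(V(t) - V(t-h))$; differentiating gives $(\partial_{t}v_{h})(t) = \frac{1}{h}(\tilde{v}(t) - \tilde{v}(t-h))$.

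For (2), the same representation $v_{h}(t) = \frac{1}{h}(V(t)-V(t-h))$ shows continuity into $L^{q}(E)$, since as $t'\to t$ the two integration windows differ by a set of vanishing measure and absolute continuity of the Bochner integral forces $\lVert v_{h}(t) - v_{h}(t')\rVert_{L^{q}(E)} \to 0$. The $L^{\infty}$ bound is a one-line H\"older estimate on the finite window $[t-h,t]$: $\lVert v_{h}(t)\rVert_{L^{q}(E)} \leq \frac{1}{h}\int_{t-h}^{t}\lVert\tilde{v}(s)\rVert_{L^{q}(E)}\,ds \leq h^{-1/r}\lVert v\rVert_{L^{r}(I;L^{q}(E))}$, uniformly in $t$.

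Finally, for (3) I would invoke the approximate-identity mechanism. Minkowski's integral inequality reduces the mixed norm to the scalar convolution $g * \phi_{h}$ with $g(s) = \lVert\tilde{v}(s)\rVert_{L^{q}(E)}$, whence scalar Young's inequality gives the uniform operator bound $\lVert v_{h}\rVert_{L^{r}(I;L^{q}(E))} \leq \lVert v\rVert_{L^{r}(I;L^{q}(E))}$. Then, on the dense subclass of continuous compactly supported $v$, uniform continuity yields $v_{h}\to v$ in $L^{r}$, and a standard three-$\varepsilon$ density argument using the uniform bound promotes this to all $v \in L^{r}(I;L^{q}(E))$ for $1\leq r<\infty$. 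The main obstacle I anticipate is purely the vector-valued measure theory rather than any genuine difficulty: one must ensure $\tilde{v}\in L^{1}_{\mathrm{loc}}(\mathbb{R};L^{q}(E))$ so that $V$ is well-defined, and justify $V'=\tilde{v}$ a.e.\ via the Lebesgue differentiation theorem in the Banach-valued setting (immediate by Fubini when one works in the variable $x$, or via the Radon--Nikodym property for $1<q<\infty$); the endpoint $r=\infty$ in (3) is the one delicate point, where convergence holds only under the additional continuity present in the dense class, so the convergence statement is to be read for $1\leq r<\infty$.
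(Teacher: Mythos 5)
Your proof is correct, but there is nothing in the paper to compare it against line by line: the paper does not prove Lemma \ref{Lemma on Steklov} at all. It presents the lemma as a collection of known facts, citing \cite{BD25} for part (1) and \cite{CDG17} for parts (2) and (3), and then uses it as a black box in the proof of Theorem \ref{Theorem 4.1}. Your argument supplies the missing self-contained proof, and it follows the standard route such references take: identify $v_{h} = \tilde{v} \ast_{t} \phi_{h}$ with $\phi_{h} = h^{-1} \mathbf{1}_{[0,h]}$, get (1) from Jensen plus Tonelli, (2) from absolute continuity of the Bochner integral together with a H\"older bound, and (3) from the uniform Young-type bound plus density of compactly supported continuous functions. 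What your version buys is transparency (the lemma is genuinely elementary) at the cost of length, which is presumably why the authors chose citation. Two small repairs. First, your antiderivative $V(t) = \int_{-\infty}^{t} \tilde{v}(s)\, ds$ need not converge when $I$ is unbounded below (e.g.\ $r = \infty$ with $I = \mathbb{R}$); fix a finite base point $t_{0}$ and set $V(t) = \int_{t_{0}}^{t} \tilde{v}(s)\, ds$, which changes nothing since only the differences $V(t) - V(t-h)$ enter. Second, your caveat that (3) can only hold for $r < \infty$ is a correct observation about the statement as written --- taking $v(t) = \mathbf{1}_{(0,1)}(t)$ constant in space shows $\lVert v_{h} - v \rVert_{L^{\infty}} \geq \tfrac{1}{2}$ for all $h$ --- but it is harmless for the paper, which only invokes (3) with $r = q = n < \infty$ and in $L^{1}((0,T) \times \mathbb{T}^{d})$.
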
 

\begin{proof}[Proof of Theorem \ref{Theorem 4.1}]
To mollify in space, we consider $\rho \in C_{c}^{\infty} (\mathbb{R}^{d})$ that is non-negative such that $\int_{\mathbb{R}^{d}} \rho(x) dx = 1$, define a mollifier $\rho_{\epsilon}(x) \triangleq \epsilon^{-d} \rho(\frac{x}{\epsilon})$, periodize in $x \in \mathbb{T}^{d}$ in the standard way, and relabel it as $\rho$ so that for any $f \in L^{1} (\mathbb{T}^{d})$, we can define $f_{\epsilon}(x) \triangleq (f \ast_{x} \rho_{\epsilon})(x)$. 

Now, similarly to \cite{BD25}, we fix $T> 0$ and assume that given $u^{\text{in}} \in L^{n} (\mathbb{T}^{d})$, $u_{j}$, $j \in \{1,2\}$, are both weak solutions from same initial data according to Definition \ref{Definition 4.1} so that they satisfy 
\begin{align*}
&- \int_{\mathbb{T}^{d}} u^{\text{in}} (x) \psi(0,x) dx - \int_{0}^{T} \int_{\mathbb{T}^{d}} u(s,x) \partial_{s} \psi(s,x) dx ds  \nonumber \\
&  = - \int_{0}^{T} \int_{\mathbb{T}^{d}} u^{n}(s,x) \psi(s,x) dx ds + \int_{0}^{T} \int_{\mathbb{T}^{d}} u(s,x) \Delta \psi(s,x) dx ds
\end{align*} 
since $\psi(T) \equiv 0$. We extend $u_{j} \equiv 0$ on $(-\infty, 0)$. Then, denoting by $u_{h}(t,x) \triangleq \frac{1}{h} \int_{t-h}^{t} u(s,x) ds$, we work on 
\begin{align*}
& - \int_{\mathbb{T}^{d}} u^{\text{in}} (x) \psi(0,x) dx - \int_{0}^{T} \int_{\mathbb{T}^{d}} u_{j,h} (s,x) \partial_{s} \psi(s,x) dx ds \\
& =- \int_{0}^{T} \int_{\mathbb{T}^{d}} u_{j,h}^{n} (s,x) \psi(s,x) dx ds + \int_{0}^{T} \int_{\mathbb{T}^{d}} u_{j,h} (s,x) \Delta \psi(s,x) dx ds. 
\end{align*} 
We integrate by parts in the time derivative to obtain 
\begin{align*}
& - \int_{\mathbb{T}^{d}} u^{\text{in}} (x) \psi(0,x) dx + \int_{0}^{T} \int_{\mathbb{T}^{d}} \partial_{s}  u_{j,h} (s,x) \psi(s,x) dx ds \\
& = - \int_{0}^{T} \int_{\mathbb{T}^{d}} u_{j,h}^{n} (s,x) \psi(s,x) dx ds + \int_{0}^{T} \int_{\mathbb{T}^{d}} u_{j,h} (s,x) \Delta \psi(s,x) dx ds
\end{align*}
as $\psi(T) \equiv 0$ and $u_{j,h}(0,x) = 0$ for both $j\in \{1,2\}$. Thus, for all $v \in C_{0}^{\infty} ( \mathbb{R} \times C^{\infty} (\mathbb{T}^{d}))$ and $t \in (0, T)$, we have 
\begin{align*}
& - \int_{\mathbb{T}^{d}} u^{\text{in}} (x) v(0,x) dx + \int_{0}^{t} \int_{\mathbb{T}^{d}} \partial_{s}u_{j,h} (s,x) v(s,x) dx ds \\
& = - \int_{0}^{t} \int_{\mathbb{T}^{d}} u_{j,h}^{n} (s,x) v(s,x) dx ds + \int_{0}^{t} \int_{\mathbb{T}^{d}} u_{j,h} (s,x) \Delta v(s,x) dx ds. 
\end{align*}
We define $w \triangleq u_{1} - u_{2}$ so that 
\begin{align*}
u_{1}^{n} - u_{2}^{n} = (u_{1} - u_{2}) [ u_{1}^{n-1} + u_{1}^{n-2} u_{2} + \hdots + u_{2}^{n-1} ] = w \sum_{l=0}^{n-1} u_{1}^{n-1-l} u_{2}^{l} 
\end{align*}
to deduce 
\begin{align}
&\int_{0}^{t} \int_{\mathbb{T}^{d}} \partial_{s} w_{h}(s,x) v(s,x) dx ds \nonumber \\
&= - \int_{0}^{t} \int_{\mathbb{T}^{d}} [ w_{h} \sum_{l=0}^{n-1} u_{1,h}^{n-1-l} u_{2,h}^{l} ] v(s,x) dx ds + \int_{0}^{t} \int_{\mathbb{T}^{d}} w_{h}(s,x) \Delta v(s,x) dx ds.   \label{NEW2}
\end{align}
Now $\left( \frac{w_{h,\epsilon}}{\sqrt{ \delta + w_{h,\epsilon}^{2}}}\right)_{\epsilon}$ is $C^{\infty}$ in space and by Lemma \ref{Lemma on Steklov} (1), for all $h > 0$ fixed, $\partial_{t} w_{h} \in L_{T,x}^{n}$ so that by approximation we can take $v = \left( \frac{w_{h,\epsilon}}{\sqrt{ \delta + w_{h,\epsilon}^{2}}}\right)_{\epsilon}$ in \eqref{NEW2} to obtain
\begin{align*}
&\int_{0}^{t} \int_{\mathbb{T}^{d}} \partial_{s} w_{h}(s,x) \left( \frac{w_{h,\epsilon}}{\sqrt{ \delta + w_{h,\epsilon}^{2}}}\right)_{\epsilon} (s,x) dx ds \\
&= - \int_{0}^{t} \int_{\mathbb{T}^{d}} [ w_{h} \sum_{l=0}^{n-1} u_{1,h}^{n-1-l} u_{2,h}^{l} ] \left( \frac{w_{h,\epsilon}}{\sqrt{ \delta + w_{h,\epsilon}^{2}}}\right)_{\epsilon} (s,x) dx ds \\
& \hspace{10mm}  + \int_{0}^{t} \int_{\mathbb{T}^{d}} w_{h}(s,x) \Delta \left( \frac{w_{h,\epsilon}}{\sqrt{ \delta + w_{h,\epsilon}^{2}}}\right)_{\epsilon} (s,x) dx ds.  
\end{align*}
We then shift the mollification to deduce 
\begin{align*}
&\int_{0}^{t} \int_{\mathbb{T}^{d}} \partial_{s} w_{h,\epsilon}(s,x) \left( \frac{w_{h,\epsilon}}{\sqrt{ \delta + w_{h,\epsilon}^{2}}}\right) (s,x) dx ds \\
&= - \int_{0}^{t} \int_{\mathbb{T}^{d}} [ w_{h} \sum_{l=0}^{n-1} u_{1,h}^{n-1-l} u_{2,h}^{l} ]_{\epsilon} \left( \frac{w_{h,\epsilon}}{\sqrt{ \delta + w_{h,\epsilon}^{2}}}\right) (s,x) dx ds \\
& \hspace{10mm}  + \int_{0}^{t} \int_{\mathbb{T}^{d}} w_{h,\epsilon}(s,x) \Delta \left( \frac{w_{h,\epsilon}}{\sqrt{ \delta + w_{h,\epsilon}^{2}}}\right) (s,x) dx ds.  
\end{align*}
We integrate by parts on the first one with respect to time to deduce 
\begin{align*}
&\int_{\mathbb{T}^{d}} w_{h,\epsilon}(t,x) \left( \frac{w_{h,\epsilon}}{\sqrt{ \delta + w_{h,\epsilon}^{2}}} \right)(t,x)dx- \int_{0}^{t} \int_{\mathbb{T}^{d}} w_{h,\epsilon}(s,x) \left(\frac{ \partial_{s} w_{h,\epsilon}\delta}{(\delta + w_{h,\epsilon}^{2})^{\frac{3}{2}}}\right)(s,x) dxds\\
&= - \int_{0}^{t} \int_{\mathbb{T}^{d}} [ w_{h} \sum_{l=0}^{n-1} u_{1,h}^{n-1-l} u_{2,h}^{l} ]_{\epsilon} \left( \frac{w_{h,\epsilon}}{\sqrt{ \delta + w_{h,\epsilon}^{2}}}\right) (s,x) dx ds  \\
& \hspace{10mm} + \int_{0}^{t} \int_{\mathbb{T}^{d}} w_{h,\epsilon}(s,x) \Delta \left( \frac{w_{h,\epsilon}}{\sqrt{ \delta + w_{h,\epsilon}^{2}}}\right) (s,x) dx ds  
\end{align*} 
where we used the fact that $w_{h,\epsilon}(0) \equiv 0$. We can compute the diffusion term as 
\begin{align*}
&\int_{0}^{t} \int_{\mathbb{T}^{d}} w_{h,\epsilon}(s,x) \Delta \left( \frac{w_{h,\epsilon}}{\sqrt{ \delta + w_{h,\epsilon}^{2}}}\right) (s,x) dx ds \\
& \hspace{10mm} = - \int_{0}^{t} \int_{\mathbb{T}^{d}} \lvert \nabla w_{h,\epsilon} (s,x) \rvert^{2}  \frac{\delta}{(\delta + w_{h,\epsilon}^{2})^{\frac{3}{2}}} dx ds\leq 0
\end{align*} 
so that 
\begin{align}
&\int_{\mathbb{T}^{d}} w_{h,\epsilon}(t,x) \left( \frac{w_{h,\epsilon}}{\sqrt{ \delta + w_{h,\epsilon}^{2}}} \right)(t,x) dx- \int_{0}^{t} \int_{\mathbb{T}^{d}} w_{h,\epsilon}(s,x)   \left(\frac{\partial_{s} w_{h,\epsilon} \delta}{(\delta + w_{h,\epsilon}^{2})^{\frac{3}{2}}} \right)(s,x)dxds  \nonumber \\
& \hspace{10mm} \leq - \int_{0}^{t} \int_{\mathbb{T}^{d}} [ w_{h} \sum_{l=0}^{n-1} u_{1,h}^{n-1-l} u_{2,h}^{l} ]_{\epsilon} \left( \frac{w_{h,\epsilon}}{\sqrt{ \delta + w_{h,\epsilon}^{2}}}\right) (s,x) dx ds.  \label{NEW1}
\end{align} 

To handle the nonlinear term, we split it to three parts: 
\begin{align}\label{nonlinear} 
-\int_{0}^{t} \int_{\mathbb{T}^{d}} [ w_{h} \sum_{l=0}^{n-1} u_{1,h}^{n-1-l} u_{2,h}^{l} ]_{\epsilon} \left(\frac{w_{h,\epsilon}}{ \sqrt{ \delta +  w_{\epsilon}^{2}}}\right) dx ds = I_{1,h, \epsilon} + I_{2,h, \epsilon} + I_{3,h, \epsilon}, 
\end{align} 
where 
\begin{subequations}
\begin{align}
I_{1,h,\epsilon} =& - \int_{0}^{t} \int_{\mathbb{T}^{d}} \Bigg( \left[ w_{h} \sum_{l=0}^{n-1} u_{1,h}^{n-1-l} u_{2,h}^{l} \right]_{\epsilon} -w_{h} \sum_{l=0}^{n-1} u_{1,h}^{n-1-l} u_{2,h}^{l} \Bigg) \frac{ w_{h,\epsilon}}{\sqrt{\delta + w_{h,\epsilon}^{2}}} dx ds, \\
I_{2,h,\epsilon} =&  - \int_{0}^{t} \int_{\mathbb{T}^{d}} (w_{h}- w_{h,\epsilon}) \sum_{l=0}^{n-1} u_{1,h}^{n-1-l} u_{2,h}^{l}  \frac{w_{h,\epsilon}}{\sqrt{\delta + w_{h,\epsilon}^{2}}} dx ds, \label{Define I2}\\
I_{3,h,\epsilon} =& -\int_{0}^{t} \int_{\mathbb{T}^{d}} w_{h,\epsilon} \sum_{l=0}^{n-1} u_{1,h}^{n-1-l} u_{2,h}^{l} \frac{w_{h,\epsilon}}{\sqrt{\delta + w_{h,\epsilon}^{2}}} dx ds. \label{Define I3}
\end{align} 
\end{subequations}  
First, we bound 
\begin{align}\label{Bound on I1}
I_{1,h,\epsilon}\leq \Bigg\lVert  \lVert  \left[w_{h} \sum_{l=0}^{n-1} u_{1,h}^{n-1-l} u_{2,h}^{l} \right]_{\epsilon} - w_{h} \sum_{l=0}^{n-1} u_{1,h}^{n-1-l} u_{2,h}^{l}  \rVert_{L^{1}(\mathbb{T}^{d})} \Bigg\rVert_{L^{1}(0,t)}. 
\end{align}
We have for almost (a.e.) $s \in [0,t]$, $w_{h}(s) \sum_{l=0}^{n-1} u_{1,h}^{n-1-l} (s)u_{2,h}^{l}(s) \in L^{1}(\mathbb{T}^{d})$ because $w, u_{1}, u_{2} \in L^{n} (0,\infty; L^{n} (\mathbb{T}^{d}))$ by hypothesis and thus by Lemma \ref{Lemma on Steklov} (2), $w_{h}, u_{1,h}, u_{2,h} \in L^{\infty} (0, \infty; L^{n}(\mathbb{T}^{d}))$. Thus, the standard result on mollifiers allows us to take $\epsilon \searrow 0$ to deduce 
\begin{equation}\label{est 1}
\lim_{\epsilon \searrow 0}  I_{1,h,\epsilon} \leq 0. 
\end{equation} 

Next, we can estimate $I_{2,h,\epsilon}$ in \eqref{Define I2} by 
\begin{align}\label{est 2} 
I_{2,h,\epsilon}  \leq \lVert w_{h} - w_{h,\epsilon} \rVert_{L^{n} (0, t; L^{n} (\mathbb{T}^{d})) } \sum_{l=0}^{n-1} \lVert u_{1,h} \rVert_{L^{n} (0, t; L^{n} (\mathbb{T}^{d}))}^{n-1-l} \lVert u_{2,h} \rVert_{L^{n} (0, t; L^{n} (\mathbb{T}^{d}))}^{l} 
\end{align}
and therefore 
\begin{equation}
\lim_{\epsilon \searrow 0}  I_{2,h,\epsilon} \leq 0. 
\end{equation} 
Finally, for $I_{3,h,\epsilon}$ in \eqref{Define I3} we first write in details 
\begin{align}
&- w_{h,\epsilon} \sum_{l=0}^{n-1} u_{1,h}^{n-1-l} u_{2,h}^{l} \frac{w_{h,\epsilon}}{\sqrt{\delta + w_{h,\epsilon}^{2}}}  \nonumber\\
=& - \frac{w_{h,\epsilon}^{2}}{\sqrt{\delta + w_{h,\epsilon}^{2}}}   [ u_{1,h}^{n-1} + u_{1,h}^{n-3} u_{2,h}^{2} + \hdots + u_{1,h}^{2} u_{2,h}^{n-3} + u_{2,h}^{n-1} ]  \nonumber \\
&  -  \frac{w_{h,\epsilon}^{2}}{\sqrt{\delta + w_{h,\epsilon}^{2}}} [ u_{1,h}^{n-2} u_{2,h} + u_{1,h}^{n-4}  u_{2,h}^{3} + \hdots + u_{1,h}^{3} u_{2,h}^{n-4} + u_{1,h} u_{2,h}^{n-2} ]. \label{inductive} 
\end{align}
Considering that $n\in \mathbb{N}, n \geq 3$ is odd, the first bracket is already non-positive, leaving our task to the second bracket. In fact, we will show that this sum is non-positive as a whole only with the help from the first bracket. We can estimate 
\begin{align*} 
& - \frac{w_{h,\epsilon}^{2}}{\sqrt{\delta + w_{h,\epsilon}^{2}}} u_{1,h}^{n-2} u_{2,h}  \leq \frac{w_{h,\epsilon}^{2}}{\sqrt{\delta + w_{h,\epsilon}^{2}}} \frac{1}{2} [ u_{1,h}^{n-1} + u_{1,h}^{n-3} u_{2,h}^{2}], \\
& - \frac{w_{h,\epsilon}^{2}}{\sqrt{\delta + w_{h,\epsilon}^{2}}} u_{1,h}^{n-4} u_{2,h}^{3}   \leq \frac{w_{h,\epsilon}^{2}}{\sqrt{\delta + w_{h,\epsilon}^{2}}} \frac{1}{2} [ u_{1,h}^{n-3} u_{2,h}^{2}+  u_{1,h}^{n-5} u_{2,h}^{4} ], \\
& - \frac{w_{h,\epsilon}^{2}}{\sqrt{\delta + w_{h,\epsilon}^{2}}} u_{1,h}^{n-6} u_{2,h}^{5}   \leq \frac{w_{h,\epsilon}^{2}}{\sqrt{\delta + w_{h,\epsilon}^{2}}} \frac{1}{2} [ u_{1,h}^{n-5} u_{2,h}^{4}+  u_{1,h}^{n-7} u_{2,h}^{6} ], \\
&\vdots \\
& - \frac{w_{h,\epsilon}^{2}}{\sqrt{\delta + w_{h,\epsilon}^{2}}} u_{1,h}^{5} u_{2,h}^{n-6}   \leq \frac{w_{h,\epsilon}^{2}}{\sqrt{\delta + w_{h,\epsilon}^{2}}} \frac{1}{2} [ u_{1,h}^{4} u_{2,h}^{n-5} + u_{1,h}^{6} u_{2,h}^{n-7} ], \\
& - \frac{w_{h,\epsilon}^{2}}{\sqrt{\delta + w_{h,\epsilon}^{2}}} u_{1,h}^{3} u_{2,h}^{n-4}   \leq \frac{w_{h,\epsilon}^{2}}{\sqrt{\delta + w_{h,\epsilon}^{2}}} \frac{1}{2} [ u_{1,h}^{2} u_{2,h}^{n-3} + u_{1,h}^{4} u_{2,h}^{n-5} ], \\
& - \frac{w_{h,\epsilon}^{2}}{\sqrt{\delta + w_{h,\epsilon}^{2}}} u_{1,h} u_{2,h}^{n-2}   \leq \frac{w_{h,\epsilon}^{2}}{\sqrt{\delta + w_{h,\epsilon}^{2}}} \frac{1}{2} [ u_{2,h}^{n-1} + u_{1,h}^{2} u_{2,h}^{n-3} ]. 
\end{align*}
Thus, 
\begin{equation}
- w_{h,\epsilon} \sum_{l=0}^{n-1} u_{1,h}^{n-1-l} u_{2,h}^{l} \frac{w_{h,\epsilon}}{\sqrt{\delta + w_{h,\epsilon}^{2}}} \leq - \frac{1}{2} \frac{w_{h,\epsilon}^{2}}{\sqrt{\delta + w_{h,\epsilon}^{2}}} [ u_{1,h}^{n-1} + u_{2,h}^{n-1} ] \leq 0. 
\end{equation} 
Applying this result to \eqref{Define I3} allows us to conclude that 
\begin{equation}\label{est 3}
I_{3,h,\epsilon} \leq 0. 
\end{equation} 
Applying \eqref{est 1}, \eqref{est 2}, and \eqref{est 3} to \eqref{nonlinear} allows us to conclude that 
\begin{equation}\label{est 4}
\lim_{\epsilon \to 0} \left( -\int_{0}^{t} \int_{\mathbb{T}^{d}} \left[ w_{h} \sum_{l=0}^{n-1} u_{1,h}^{n-1-l} u_{2,h}^{l} \right]_{\epsilon} \frac{w_{h,\epsilon}}{\sqrt{\delta + w_{h,\epsilon}^{2}}} dx ds \right) \leq 0. 
\end{equation} 
Thus, taking $\epsilon \searrow 0$ in \eqref{NEW1} and applying \eqref{est 4} gives us 
\begin{align*}
 \int_{\mathbb{T}^{d}} w_{h} (t,x) \left( \frac{w_{h}}{\sqrt{ \delta + w_{h}^{2}}} \right) (t,x) dx  - \int_{0}^{t} \int_{\mathbb{T}^{d}} w_{h}(s,x)    \left(\frac{\partial_{s} w_{h}\delta}{( \delta + w_{h}^{2})^{\frac{3}{2}}}\right)(s,x)  dxds  \leq 0.
\end{align*}
Taking $\delta \searrow 0$ gives us $\lVert w_{h}(t) \rVert_{L^{1}(\mathbb{T}^{d})} = 0$ so that $\lVert w_{h} \rVert_{L^{1}((0, T) \times \mathbb{T}^{d})} = 0$. Taking $h\searrow 0$ gives us $\lVert w \rVert_{L^{1}((0, T) \times \mathbb{T}^{d})} = 0$ according to Lemma \ref{Lemma on Steklov} (3). At last, this implies $\lVert w(t) \rVert_{L^{1}(\mathbb{T}^{d})} = 0$ for almost every $t$, allowing us to conclude our proof.
\end{proof} 

Theorem \ref{Theorem 4.1} indicates that, unless the STWN $\xi$ induces non-uniqueness, a successful application of the convex integration technique on the $\Phi^{4}$ model \eqref{Define Phi4}, which was suggested as an open problem in \cite[Section 4]{Y25e}, seems highly unlikely. To the best of the authors' knowledge, there has not been a case in which convex integration technique has been achieved on an SPDE and its proof fails in the case of zero noise. in fact, in all the successful accounts of convex integration on SPDEs thus far, convex integration was always achieved in the deterministic case first and thereafter extended in the presence of random force. 

On the other hand, although the Yang-Mills equation \eqref{Yang-Mills} consists of nonlinearity of a cubic type (besides the Burgers' type), we believe that nothing definitive can be stated at the moment concerning its potential application of the convex integration technique due to its complex structure. The proof of Theorem \ref{Theorem 4.1} crucially depended on the sign of the nonlinearity (recall Remark \ref{Remark on the sign}) and the cubic terms in \eqref{Yang-Mills} are not necessarily damping. 

\section{Conclusion} 
We reviewed recent developments of convex integration technique applied on singular SPDEs. As we described through Example \ref{Example on singular NS} (also Remark \ref{Remark 3.3}), the technique has the potential to extend a known local solution to become global-in-time, although non-unique, when no other approach seems available. In fact, as we described through Example \ref{Example on singular SQG}, it can produce a global-in-time solution when not even a local-in-time solution is known to exist. Finally, in Theorem \ref{Theorem 4.1}  we showed that a weak solution to the deterministic heat equation with damping of odd power in any dimension that merely has minimum regularity needed for the nonlinear term to be well-defined is already unique, making the case that the application of the convex integration technique on the $\Phi^{4}$ model or any variant replacing $u^{3}$ by $u^{n}$ for $n\in\mathbb{N}$ with $n \geq 3$ odd, is highly unlikely unless the STWN induces non-uniqueness. 

\section*{Acknowledgments}
We express deep gratitude to the editor and the anonymous reviewer for the careful reading and valuable suggestions and comments that have improved this manuscript significantly. The second author expresses deep gratitude to Prof. Alexey Cheskidov and Prof. Matthew Novack for valuable discussions. 

\bibliographystyle{amsalpha}

\end{document}

%-----------------------------------------------------------------------
% End of article.tex
%-----------------------------------------------------------------------